\declaretheorem[name=Theorem,numberwithin=section]{thm}
\newtheorem*{thm*}{Theorem}
\newtheorem{cor}[thm]{Corollary}
\newtheorem{prop}[thm]{Proposition}
\newtheorem{lem}[thm]{Lemma}
\theoremstyle{definition}
\newtheorem{defn}[thm]{Definition}
\newtheorem*{ack}{Acknowledgements}
\newtheorem{ex}[thm]{Example}
\theoremstyle{remark}
\newcommand{\QQ}{\mathbb{Q}}
\newcommand{\RR}{\mathbb{R}}
\newcommand{\OO}{\mathcal{O}}
\newcommand{\PP}{\mathbb{P}}
\newcommand{\Pic}{\text{Pic}}
\newcommand{\Mgb}{\overline{\mathcal{M}}_g}
\newcommand{\Mgnb}{\overline{\mathcal{M}}_{g,n}}
\newcommand{\MOnb}{\overline{\mathcal{M}}_{0,n}}
\newcommand{\rk}{\text{rk}}
\newcommand{\M}[2]{\mathcal{M}_{{#1}, {#2}}}
\newcommand{\Mbar}[2]{\overline{\mathcal{M}}_{{#1}, {#2}}}
\newtheorem{Thm*}{Theorem*}
\theoremstyle{definition}
\title{Non-polyhedral effective cones from the moduli space of curves 
}
\author{Scott Mullane}
\date{\today}
\begin{document}
\thispagestyle{empty}

\maketitle

\begin{abstract}
We show that the pseudoeffective cone of divisors $\overline{\text{Eff}}^1(\Mbar{g}{n})$ for $g\geq 2$ and $n\geq 2$ is not polyhedral by showing that the class of the fibre of the morphism forgetting one point forms an extremal ray of the dual nef cone of curves $\overline{\text{Nef}}_1(\Mbar{g}{n})$ and the cone at this ray is not polyhedral.
\end{abstract}

\setcounter{tocdepth}{1}

\tableofcontents

\section{Introduction}
The birational geometry of a projective variety is in many ways dictated by the structure of the cone of effective divisors.
For example, a projective variety with canonical singularities is of general type if the canonical divisor lies in the interior of the effective cone. This question has attracted much attention in the case of the moduli space of curves~\cite{HarrisMumford},\cite{Harris},\cite{EisenbudHarrisKodaira},\cite{Farkas23},\cite{FarkasKoszul}. Further, a $\QQ$-factorial projective variety with finitely generated Cox ring is known as a \emph{Mori dream space} because the effective cone is polyhedral and decomposes into finitely many convex chambers each providing a birational model of the variety. In the case of moduli spaces, these models often carry modular significance. The question of when the moduli space of curves is a Mori dream space has also attracted much interest~\cite{CastravetTevelev},\cite{GK},\cite{HKL},\cite{Keel2},\cite{HuKeel}. In particular, $\Mbar{g}{n}$ cannot be a Mori dream space if the effective cone is non-polyhedral~\cite{ChenCoskun},\cite{Mullane1}, though understanding the structure of the effective cone of $\Mgnb$ is strongly motivated beyond this well-progressed question.

\subsection{History}
Many authors have contributed to the current understanding of the structure of the effective cone of the moduli space of curves. The effective cone of $\Mbar{0}{5}$ is generated by boundary divisors, though Hassett and Tschinkel~\cite{HT} showed the effective cone of $\Mbar{0}{6}$ is generated by the boundary and Keel-Vermiere divisors~\cite{KV} that intersect the interior of the moduli space and were originally provided as a counter example to Fulton's conjecture that cones of effective cycles in $\Mbar{0}{n}$ in all codimension were generated by boundary classes. Castravet and Tevelev~\cite{CastravetTevelevHypertree} generalised the divisors of Keel and Vermiere through Brill-Noether theory on singular higher genus curves to produce finitely many extremal rays in the effective cone of $\Mbar{0}{n}$ for each $n\geq 7$ indexed by irreducible hypertrees and conjectured that with the boundary divisors these rays generated the effective cone. Opie~\cite{Opie} found new extremal rays that contradicted this conjecture. The Picard rank of both $\Mbar{1}{2}$ and $\overline{\mathcal{M}}_2$ is two and in both cases the effective cone is generated by the two irreducible components of the boundary. However, for $\Mbar{1}{n}$ with $n\geq3$, Chen and Coskun~\cite{ChenCoskun} exhibited infinitely many extremal effective divisors via relations on the marked points in the group law of the elliptic curve on which they lie and hence showed these effective cones are not rational polyhedral. 
Rulla~\cite{Rulla} fully described the polygonal effective cones of $\Mbar{2}{1}$, $\overline{\mathcal{M}}_3$ and exhibited $7$ extremal rays in the Picard rank $6$ effective cone of $\Mbar{2}{2}$, showing this cone is not simplicial, however, recently effective divisors not accounted for by these extremal rays have been identified~\cite{Mullane2}. In general genus $g\geq 2$, Farkas and Verra~\cite{FarkasVerraU},\cite{FarkasVerraTheta} gave for each fixed $g$ and $n$ with $g-2\leq n\leq g$, a finite number of extremal divisors in $\Mbar{g}{n}$. For $\Mbar{g}{n}$ with $g\geq2$ and $n\geq g+1$, the author~\cite{Mullane1} exhibited infinitely many extremal divisors coming from the strata of meromorphic differentials with fixed multiplicities of zeros and poles and hence showed in these cases that the effective cone is not rational polyhedral.

\subsection{Main results}
Let $[F]$ be the curve class obtained as the general fibre of the morphism $\pi:\Mbar{g}{n}\longrightarrow \Mbar{g}{n-1}$ that forgets the $n$th marked point. Irreducible curves with class equal to $[F]$ cover an open dense subset of $\Mbar{g}{n}$. Any effective divisor with negative intersection with $[F]$ would need to contain this dense subset implying $[F]$ must have non-negative intersection with every pseudoeffective divisor and is hence nef.

For any nef curve class $[B]$ we define the pseudoeffective dual space
$$[B]^\vee:=\{[D]\in \overline{\text{Eff}}^1(\Mbar{g}{n})\hspace{0.3cm}\big|\hspace{0.3cm} [B]\cdot[D]=0\}.$$
and obtain the following two theorems.
\begin{restatable}{thm}{Dual}
\label{Thm:dual} 
For $g\geq 2$ and $n\geq2$, 
$$\rho(\Mbar{g}{n})-n\leq\text{rank}([F]^{\vee}\otimes\RR)\leq \rho(\Mbar{g}{n})-2,$$
where $\rho(\Mbar{g}{n})$ denotes the Picard rank of $\Mbar{g}{n}$.
\end{restatable}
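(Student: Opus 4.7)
The plan is to prove the lower bound by pulling back pseudoeffective classes from $\Mbar{g}{n-1}$ and supplementing with components of split boundary divisors, and to prove the upper bound by producing a second nef curve class in the face of $\overline{\text{Nef}}_1(\Mbar{g}{n})$ containing $[F]$. For the lower bound, the map $\pi^{*}:N^{1}(\Mbar{g}{n-1})\to N^{1}(\Mbar{g}{n})$ is injective, preserves effectivity, and satisfies $\pi^{*}E\cdot[F]=0$ by the projection formula, so $\pi^{*}\overline{\text{Eff}}^{1}(\Mbar{g}{n-1})\subseteq[F]^{\vee}$ is a subcone of rank $\rho(\Mbar{g}{n-1})$. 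Further effective classes in $[F]^{\vee}$ come from boundary splittings: each boundary divisor $\delta_{h,T}\subset\Mbar{g}{n-1}$ with $h\geq 1$ or $|T|\geq 2$ satisfies $\pi^{*}\delta_{h,T}=\delta_{h,T}^{(n)}+\delta_{h,T\cup\{n\}}^{(n)}$, each summand an effective divisor avoiding a generic smooth fibre $F$ and contributing one new direction modulo the pullback subspace. The total number of such splittings equals $\rho(\Mbar{g}{n})-\rho(\Mbar{g}{n-1})-n$, so the combined rank reaches $\rho(\Mbar{g}{n})-n$.

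For the upper bound, I exhibit a curve class $B\in N_{1}(\Mbar{g}{n})_{\RR}$ linearly independent from $[F]$ such that $B\cdot D=0$ for every $D\in[F]^{\vee}$. Fix $1\leq h\leq g-1$ (possible since $g\geq 2$) and a generic point of $\delta_{h,\emptyset}\subset\Mbar{g}{n-1}$ represented by $(C_{1}\cup C_{2},p_{1},\ldots,p_{n-1})$ with $C_{1},C_{2}$ smooth of genera $h$ and $g-h$ and all $p_{i}$ on $C_{2}$. The reducible fibre of $\pi$ over this point is the union of two curves $\widetilde{C}_{1}\subset\delta_{h,\{n\}}^{(n)}$ and $\widetilde{C}_{2}\subset\delta_{h,\emptyset}^{(n)}$ (distinguished by the component containing $p_{n}$), and by flatness of $\pi$ we have $[\widetilde{C}_{1}]+[\widetilde{C}_{2}]=[F]$ in $N_{1}$. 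Both $[\widetilde{C}_{i}]$ are nef: each is contained generically in one boundary divisor whose self-intersection against $[\widetilde{C}_{i}]$ vanishes via the formula $N_{\delta}=-\psi_{q}-\psi_{q'}$ applied with both node points fixed along the family, while intersections with every other effective irreducible divisor are non-negative. Then for $D\in[F]^{\vee}$ the inequalities $D\cdot[\widetilde{C}_{1}]\geq 0$ and $D\cdot[\widetilde{C}_{2}]\geq 0$ sum to $D\cdot[F]=0$, forcing both to vanish. Setting $B=[\widetilde{C}_{1}]$ gives a class with $B\cdot\psi_{n}=2h-1$ and $B\cdot\delta_{0,\{i,n\}}=0$, differing in non-proportional ways from $[F]\cdot\psi_{n}=2g-2$ and $[F]\cdot\delta_{0,\{i,n\}}=1$; hence $B$ and $[F]$ are linearly independent, the dual face of $\overline{\text{Nef}}_{1}(\Mbar{g}{n})$ containing $[F]$ has dimension at least two, and $\text{rank}([F]^{\vee}\otimes\RR)\leq\rho(\Mbar{g}{n})-2$.
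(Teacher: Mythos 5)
Your lower bound is fine and matches the paper's Proposition~\ref{Prop:eff} in substance: the pullbacks $\pi^*\overline{\text{Eff}}^1(\Mbar{g}{n-1})$ together with the individual components of the split boundary divisors give $\rho(\Mbar{g}{n})-n$ independent effective classes killed by $[F]$. The upper bound, however, contains a fatal error: the two components $\widetilde{C}_1,\widetilde{C}_2$ of the reducible fibre over a general point of $\delta_{h:\emptyset}\subset\Mbar{g}{n-1}$ are \emph{not} nef curve classes. Take $\widetilde{C}_1$, the family in which $p_n$ sweeps the genus-$h$ component $C_1$. When $p_n$ reaches the node $q$, a rational bubble sprouts, so $\widetilde{C}_1$ meets $\delta_{h:\emptyset}$ transversally in one point; since $\pi_*[\widetilde{C}_1]=0$ and $\pi^*\delta_{h:\emptyset}=\delta_{h:\emptyset}+\delta_{h:\{n\}}$, this forces $[\widetilde{C}_1]\cdot\delta_{h:\{n\}}=-1<0$. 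Your justification via $N_\delta=-\psi_q-\psi_{q'}$ with ``both node points fixed'' is exactly where this goes wrong: although $q$ is a fixed point of the fixed curve $C_1$, the class $\psi_q$ has degree $1$, not $0$, on the family $(C_1,p_n,q)$ with $p_n$ moving, precisely because of the bubbling at $p_n=q$ (this is the intersection $F_1\cdot\psi_1=1$ in Theorem~\ref{Int}). Consequently your key conclusion is false as stated: $\delta_{h:\emptyset}$ and $\delta_{h:\{n\}}$ both lie in $[F]^\vee$, yet $[\widetilde{C}_1]\cdot\delta_{h:\emptyset}=1$ and $[\widetilde{C}_1]\cdot\delta_{h:\{n\}}=-1$, so $B=[\widetilde{C}_1]$ does not annihilate $[F]^\vee$. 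Note also that if your decomposition of $[F]$ into two non-proportional nef classes were correct, it would directly contradict Theorem~\ref{Thm:Extremal}, which asserts $[F]$ is extremal in $\overline{\text{Nef}}_1(\Mbar{g}{n})$.

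The paper's route around this is genuinely different and cannot be shortcut this way. For $n=2$ it works with the test surface $C\times C$: any pseudoeffective $[D]\in[F]^\vee$ outside the span of Proposition~\ref{Prop:eff} satisfies $[B_D]\cdot[D]=-8c_{\psi_2}^2(g-1)^2g<0$, and a limiting/base-locus argument (Proposition~\ref{Prop:BaseLociBD} applied to effective classes converging to $[D]$) produces a rigid extremal $E$ with $[F]\cdot[E]>0$ that must split off from $[D]$, a contradiction. For $n\geq 3$ it does use a reducible fibre decomposition $[F]=[B_1]+[B_2]$, but over $\delta_{0:\{1,\dots,n-1\}}$, and it treats $[B_1],[B_2]$ not as nef classes but as \emph{covering curves} of the boundary divisors they sweep out (each with negative intersection against that divisor); extremality of $[D]$ plus Lemma~\ref{Lem:ps} then forces $[B_1]\cdot[D]=[B_2]\cdot[D]=0$, and pulling back along the gluing map $\alpha$ yields the second independent condition $c_{\psi_n}=0$. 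If you want to salvage your approach, you would need to replace the nefness claim by this covering-curve/extremality mechanism and restrict attention to extremal rays of $[F]^\vee$ away from the relevant boundary divisors.
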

\begin{restatable}{thm}{Extremal}
\label{Thm:Extremal}
The curve class $[F]$ is extremal in $\overline{\text{Nef}}_1(\Mbar{g}{n})$ for $g\geq2$ and $n\geq1$.
\end{restatable}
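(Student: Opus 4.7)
The plan is to show extremality directly from the definition: suppose $[F]=[B_1]+[B_2]$ with $[B_1],[B_2] \in \overline{\text{Nef}}_1(\Mbar{g}{n})$, and deduce $[B_i] \in \RR_{\geq 0}[F]$. The case $n=1$ is immediate since then $\ker\pi_*$ has dimension $\rho(\Mbar{g}{1})-\rho(\Mbar{g}) = 1$ and coincides with $\RR[F]$. For $n \geq 2$, the first step is to apply $\pi_*$ to the decomposition: flatness of $\pi$ ensures $\pi^*$ preserves pseudoeffectivity of divisors, so by the projection formula $\pi_*$ sends nef curve classes to nef curve classes. Since $\overline{\text{Nef}}_1(\Mbar{g}{n-1})$ is pointed (as the dual of the full-dimensional pseudoeffective cone) and $\pi_*[F]=0$, we conclude $\pi_*[B_i]=0$, placing each $[B_i]$ in the $n$-dimensional subspace $\ker\pi_* \subset N_1(\Mbar{g}{n})$.

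Next I would exploit the standard pullback formula for boundary divisors under $\pi$: for each pair $(h,T)$ with $T \subseteq \{1,\ldots,n-1\}$ and both $\delta_{h:T}^{\Mbar{g}{n}}$ and $\delta_{h:T \cup \{n\}}^{\Mbar{g}{n}}$ valid, one has $\pi^*\delta_{h:T}^{\Mbar{g}{n-1}} = \delta_{h:T}^{\Mbar{g}{n}} + \delta_{h:T \cup \{n\}}^{\Mbar{g}{n}}$. Pairing against $[B_i] \in \ker\pi_*$, the projection formula yields $[B_i] \cdot \delta_{h:T}^{\Mbar{g}{n}} + [B_i] \cdot \delta_{h:T \cup \{n\}}^{\Mbar{g}{n}} = 0$; both intersections are non-negative by nefness of $[B_i]$ and effectivity of the boundary divisors, so each vanishes individually.

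Passing to the quotient $W := N^1(\Mbar{g}{n})/\pi^*N^1(\Mbar{g}{n-1})$, which has dimension $n$ and is naturally dual to $\ker\pi_*$, these vanishings say that $[B_i]$ is orthogonal to every class $[\delta_{h:T \cup \{n\}}^{\Mbar{g}{n}}] \in W$. The proof concludes once these boundary classes are shown to span the codimension-one hyperplane $[F]^\perp \subset W$: that spanning statement cuts $\ker\pi_*$ down to $\RR[F]$, after which the positivity constraints $[B_i]\cdot\psi_n \geq 0$ and $[B_i]\cdot\delta_{0:\{n,j\}} \geq 0$ (from effectivity of $\psi_n$ and $\delta_{0:\{n,j\}}$), combined with $[B_1]+[B_2]=[F]$, force $[B_i]=c_i[F]$ with $c_1,c_2\geq 0$ and $c_1+c_2=1$.

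The main obstacle is establishing the spanning statement in $W$. My approach is to take $h=1$ and let $T$ range over the $n-1$ singletons $\{i\} \subset \{1,\ldots,n-1\}$, producing candidate classes $[\delta_{1:\{i,n\}}^{\Mbar{g}{n}}]$, and to verify their linear independence by pairing against explicit test curves in $\ker\pi_*$ -- for instance, curves built from a fixed general pointed curve by varying $p_n$ along a degenerating elliptic bridge attached at $p_i$ -- and computing the resulting $(n-1)\times(n-1)$ intersection matrix using the formula $\psi_n = K_\pi + \sum_{i<n}\delta_{0:\{n,i\}}$ together with the self-intersection of boundary divisors. For low-genus cases where automorphic identifications such as $\delta_{h:T} = \delta_{g-h:T^c}$ (when $g=2h$) collapse some candidate classes, one would substitute boundary pairs with larger subsets $T$ or with $h \geq 2$ until enough independent classes are produced.
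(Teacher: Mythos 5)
Your reduction steps are fine as far as they go: pushing forward a nef decomposition $[F]=[B_1]+[B_2]$ and using that $\overline{\text{Nef}}_1(\Mbar{g}{n-1})$ is pointed does give $\pi_*[B_i]=0$, and pairing with $\pi^*\delta_{h:T}=\delta_{h:T}+\delta_{h:T\cup\{n\}}$ does force $[B_i]\cdot\delta_{h:T}=[B_i]\cdot\delta_{h:T\cup\{n\}}=0$. (Your dimension counts are off, though: $\ker\pi_*$ and $W=N^1(\Mbar{g}{n})/\pi^*N^1(\Mbar{g}{n-1})$ have dimension $\rho(\Mbar{g}{n})-\rho(\Mbar{g}{n-1})$, which already for $\Mbar{2}{2}\to\Mbar{2}{1}$ is $6-3=3$, not $n=2$, since every boundary divisor of $\Mbar{g}{n-1}$ splits into two upstairs.) The fatal problem is the ``spanning statement.'' The span of $\pi^*N^1(\Mbar{g}{n-1})$ together with all classes $\delta_{h:T}$, $\delta_{h:T\cup\{n\}}$ misses exactly $\psi_n$ and the $n-1$ classes $\delta_{0:\{i,n\}}$ (which are \emph{not} of the form $\delta_{h:T\cup\{n\}}$ for a valid boundary index $(h,T)$ of $\Mbar{g}{n-1}$), so it has codimension $n$ in $N^1(\Mbar{g}{n})$, not codimension $1$. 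Adding the classes $\delta_{1:\{i,n\}}$ changes nothing --- they are already among the $\delta_{h:T\cup\{n\}}$ --- so the linear-independence verification you flag as the main obstacle is not where the difficulty lies. After all your constraints, $[B_i]$ still ranges over an $(n-1)$-parameter family (the classes $[F^{\underline t}]$ of the paper; for $n=2$, the family $[F^t]$ with $[F^t]\cdot\psi_2=(2g-1)+t$), and the positivity constraints $[B_i]\cdot\psi_n\geq0$, $[B_i]\cdot\delta_{0:\{i,n\}}\geq0$ carve out a nontrivial polytope there, not the single point $[F]$.

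Moreover, no refinement of this strategy can close the gap, because it only ever uses effective divisors $D$ with $[F]\cdot[D]=0$ (orthogonality to $[F]^\vee$), and the paper's Theorem~\ref{Thm:dual} shows $[F]^\vee\otimes\RR$ has corank at least $2$ for $n\geq2$: the divisors vanishing on $[F]$ can never cut $N_1$ down to the ray $\RR[F]$. This is exactly why the paper must introduce the Hurwitz-space divisors $\overline{D}_k$ and $\overline{E}_k$, which do \emph{not} lie in $[F]^\vee$; their $\psi_2$-coefficients grow like $k^{2g-1}$ with opposite signs while $[F]\cdot[\overline{D}_k]=[F]\cdot[\overline{E}_k]$ grows only like $k^{2g-2}$, so for every $t\neq0$ one of the two families is eventually negative on $[F^t]$. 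Your proposal contains no substitute for this asymptotic input, which is the actual content of the theorem.
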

The cone $\overline{\text{Nef}}_1(\Mbar{g}{n})$ is non-polyhedral at any extremal ray where the pseudoeffective dual space of divisors has corank $2$ or more. The main result of this paper follows from these two theorems and the duality of $\overline{\text{Nef}}_1(\Mbar{g}{n})$ and $\overline{\mbox{Eff}}^1(\Mbar{g}{n})$.
\begin{restatable}{cor}{NonPoly}
The cones $\overline{\text{Nef}}_1(\Mbar{g}{n})$ and $\overline{\mbox{Eff}}^1(\Mbar{g}{n})$ are not rational polyhedral for $g\geq2$ and $n\geq 2$.
\end{restatable}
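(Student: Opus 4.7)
The plan is to combine Theorems~\ref{Thm:Extremal} and~\ref{Thm:dual} through the standard duality between $\overline{\text{Nef}}_1(\Mbar{g}{n})$ and $\overline{\text{Eff}}^1(\Mbar{g}{n})$ under the intersection pairing. Both cones are pointed and full-dimensional in their respective N\'eron--Severi spaces, since $\overline{\text{Eff}}^1(\Mbar{g}{n})$ contains the open ample cone and its dual is automatically pointed. For such a dual pair of closed, pointed, full-dimensional convex cones, a classical fact from convex geometry says that one is polyhedral if and only if the other is, and in that situation the face lattices are anti-isomorphic via $F\mapsto F^\perp\cap C^\vee$; under this correspondence, extremal rays on one side correspond to facets (codimension-one faces) on the other.

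With this framework I would argue by contradiction. Assume $\overline{\text{Nef}}_1(\Mbar{g}{n})$ is polyhedral. By Theorem~\ref{Thm:Extremal} the class $[F]$ generates an extremal ray of $\overline{\text{Nef}}_1(\Mbar{g}{n})$, and because $[F]$ is nef the functional $[F]\cdot(-)$ is non-negative on $\overline{\text{Eff}}^1(\Mbar{g}{n})$, making the set $[F]^\vee$ of the definition an honest face of $\overline{\text{Eff}}^1(\Mbar{g}{n})$. By the duality recalled above, it is the facet of $\overline{\text{Eff}}^1(\Mbar{g}{n})$ dual to the ray $\RR_{\geq 0}[F]$, and would therefore span a subspace of dimension $\rho(\Mbar{g}{n})-1$. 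But Theorem~\ref{Thm:dual} bounds $\text{rank}([F]^\vee\otimes\RR)$ above by $\rho(\Mbar{g}{n})-2$, a contradiction. Hence $\overline{\text{Nef}}_1(\Mbar{g}{n})$ cannot be polyhedral, and by the biconditional of the first paragraph neither can $\overline{\text{Eff}}^1(\Mbar{g}{n})$; a fortiori, neither cone is rational polyhedral.

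The substantive mathematics lives entirely inside Theorems~\ref{Thm:Extremal} and~\ref{Thm:dual}, so the main obstacle here is not a step of the proof but rather the verification of those two inputs; the present argument is a piece of convex-geometric bookkeeping. If anything needs care, it is only the standard hypotheses for the polyhedral cone duality, namely that $\overline{\text{Eff}}^1(\Mbar{g}{n})$ is pointed and full-dimensional and that $[F]^\vee$ is precisely the face of this cone dual to the ray generated by $[F]$, both of which are immediate.
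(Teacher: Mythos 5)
Your argument is correct and is essentially the paper's own proof: the paper likewise deduces the corollary from Theorems~\ref{Thm:Extremal} and~\ref{Thm:dual} via the duality of $\overline{\text{Nef}}_1(\Mbar{g}{n})$ and $\overline{\mbox{Eff}}^1(\Mbar{g}{n})$, observing that an extremal nef ray whose pseudoeffective dual space has corank at least $2$ is incompatible with polyhedrality. You have merely spelled out the convex-geometric bookkeeping (extremal rays of the dual corresponding to facets) that the paper leaves implicit.
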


We summarise the arguments for each of the two main theorems.

\subsection{Proof of Theorem~\ref{Thm:dual}}
Identifying extremal classes in a cone is often a difficult undertaking. In the case of effective divisors there is one well-known trick to show extremality. If irreducible curves with numerical class equal to $[B]$ cover a Zariski  dense subset of an irreducible effective divisor $E$, then $[B]$ is known as a \emph{covering curve} for $E$ and we have the following lemma.
\begin{restatable}{lem}{Covering}
\label{lemma:covering}
If $[B]$ is a covering curve for an irreducible effective divisor $E$ with $[B]\cdot [E]<0$ then $E$ is rigid and extremal.
\end{restatable}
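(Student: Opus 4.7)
The plan is to prove rigidity first via a direct decomposition argument, and then to bootstrap to extremality by extending that argument from effective to pseudoeffective classes.

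\textbf{Setup.} Let $\{B_t\}_{t\in T}$ be the family of irreducible curves with $[B_t]=[B]$ sweeping out a Zariski dense open subset $U\subseteq E$. The only geometric input I need is the following observation: if $D'$ is any effective divisor with $E\not\subseteq\mathrm{Supp}(D')$, then $\mathrm{Supp}(D')\cap E$ is a proper closed subset of $E$, so a general $B_t\subset U$ avoids it, giving $B\cdot D' \geq 0$.

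\textbf{Rigidity.} For any effective divisor $D$ with $[D]\equiv m[E]$, write $D=aE+F$ where $a\in\ZZ_{\geq 0}$ is the multiplicity of $E$ in $D$ and $F$ is effective with $E\not\subseteq\mathrm{Supp}(F)$. By the observation, $B\cdot F\geq 0$, so
$$m(B\cdot E) \;=\; a(B\cdot E)+B\cdot F \;\geq\; a(B\cdot E),$$
which forces $a\geq m$ since $B\cdot E<0$. If $a>m$, then $[F]=(m-a)[E]$ realises the nonzero pseudoeffective class $[F]$ as a negative scalar multiple of the pseudoeffective $[E]$, putting $[E]$ in the lineality space of $\overline{\mathrm{Eff}}^1(\Mbar{g}{n})$. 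Since the effective cone of $\Mbar{g}{n}$ is pointed (an ample class separates), this forces $[E]=0$, contradicting $B\cdot E<0$. So $a=m$, and the same pointedness applied to $F$ gives $F=0$, whence $D=mE$.

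\textbf{Extremality.} Suppose $[E]=[D_1]+[D_2]$ with $[D_i]\in\overline{\mathrm{Eff}}^1$. Intersecting with $B$ shows at least one summand, say $[D_1]$, has $B\cdot D_1<0$. The central claim to extract is: for any pseudoeffective $[D]$ with $B\cdot D<0$ there exist $\alpha>0$ and a pseudoeffective class $[R]$ with $B\cdot R\geq 0$ such that $[D]=\alpha[E]+[R]$. To see this, set $\alpha^*=\sup\{\alpha\geq 0:[D]-\alpha[E]\in\overline{\mathrm{Eff}}^1\}$. Fixing an ample class $[A]$, the big classes $[D]+\epsilon[A]$ are represented by effective $\QQ$-divisors $a_\epsilon E+F_\epsilon$, and the rigidity-style computation gives $a_\epsilon\geq (B\cdot(D+\epsilon A))/(B\cdot E)\to (B\cdot D)/(B\cdot E)>0$, so $\alpha^*>0$; closedness of $\overline{\mathrm{Eff}}^1$ makes the supremum attained; and maximality of $\alpha^*$ combined with the fact that a strictly negative $B\cdot R$ would permit extracting still more $[E]$ forces $B\cdot R\geq 0$. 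Applying the claim to $[D_1]$ (and symmetrically to $[D_2]$ whenever $B\cdot D_2<0$) and summing the two decompositions reproduces the rigidity computation: the pseudoeffective remainders $[R_i]$ vanish by pointedness and the extracted coefficients must sum to $1$, so each $[D_i]$ is a non-negative scalar multiple of $[E]$.

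\textbf{Main obstacle.} The rigidity step is essentially a one-line consequence of the covering-curve observation; the technical heart of the proof is the decomposition claim used in extremality, which is the device that transports the covering-curve technique across the closure from effective to pseudoeffective classes. The pointedness of $\overline{\mathrm{Eff}}^1(\Mbar{g}{n})$ is essential throughout, both to rule out spurious anti-effective contributions from residual terms and to turn numerical triviality of an effective divisor into genuine vanishing.
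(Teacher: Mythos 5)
Your proof is correct. Note that the paper does not actually prove this lemma: it is quoted verbatim from Chen--Coskun \cite[Lemma 4.1]{ChenCoskun}, and your argument is essentially the standard one from that source --- the covering-curve observation forces $E$ into the support of any effective divisor $D$ with $[B]\cdot[D]<0$, one extracts the maximal multiple of $[E]$, and an ample perturbation plus closedness of $\overline{\mathrm{Eff}}^1$ transports the argument to pseudoeffective classes. The only auxiliary fact you invoke, pointedness of $\overline{\mathrm{Eff}}^1(\Mbar{g}{n})$, is standard (its dual, the movable cone of curves, spans $N_1$), so there is no gap.
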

The setting of $\Mbar{g}{2}$ offers a natural way to define a standard potential covering curve for any effective divisor by intersecting the divisor with a $2$-dimensional general fibre of the morphism $\Mbar{g}{2}\longrightarrow\overline{\mathcal{M}}_g$ that forgets the marked points. More formally, let $C$ be a general smooth genus $g$ curve and
$$i:C\times C\longrightarrow \Mbar{g}{2}$$
the natural morphism. For a fixed divisor $[D]$ in $\Mbar{g}{2}$, define
$$[B_D]:=i_*i^*[D].$$
Test surfaces in the context of the Chow ring of moduli spaces of curves were considered more generally by Faber in the last chapter of~\cite{Faber}. In Proposition~\ref{Thm:eff} we show that for any effective divisor $D$, the curve $[B_D]$ is either effective or zero. Further, if
$[D]$ is the class of an effective divisor it can be expressed as
$$[D]=c_{\psi_1}\psi_1+c_{\psi_2}\psi_2+c_\lambda\lambda+\sum_{i=0}^{g-1}c_{i:\{1,2\}}\delta_{i:\{1,2\}}+\sum_{i=1}^gc_{i:\{1\}}\delta_{i:\{1\}}$$ 
in the standard generators (see~\S\ref{Pic}). Note that some authors use a slightly different convention including negative signs before the coefficients of the boundary divisors.  

By appealing to Lemma~\ref{lemma:covering} (with some care to ensure the irreducibility of the curves with class $[B_D]$) we obtain the following.
\begin{restatable}{prop}{CoveringExtremal}
\label{Prop:Extremal}
If $E$ is an irreducible effective divisor in $\Mbar{g}{2}$ for $g\geq2$ and 
$$[B_E]\cdot [E]=(2g-2)\left((4g-4)c_{\psi_1}c_{\psi_2}+(c_{\psi_1}+c_{\psi_2})^2-c_{0:\{1,2\}}^2  \right)<0$$
then $E$ is rigid and extremal and $[B_E]$ is the class of a covering curve for $E$.
\end{restatable}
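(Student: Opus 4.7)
The plan is to apply Lemma~\ref{lemma:covering}: verifying $[B_E]\cdot[E]<0$ and that $[B_E]$ is the class of an irreducible covering curve of $E$ yields the claimed rigidity and extremality.

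For the intersection formula, we pull back each generator of $\Pic(\Mbar{g}{2})$ via $i$ to the surface $C\times C$. Since the two marked points collide along the diagonal $\Delta\subset C\times C$, the pulled-back universal family is obtained from the trivial family $C\times C\times C\to C\times C$ by blowing up the small diagonal $\{(z,x,y):z=x=y\}$, the exceptional divisor $\widetilde{E}$ providing the rational bubble over $\Delta$. Using the formula $\omega_{\widetilde{X}/(C\times C)}=\tau^*p_3^*K_C\otimes\mathcal{O}(\widetilde{E})$ for the relative dualizing sheaf and noting that the strict transforms of the two sections meet $\widetilde{E}$ transversely in distinct sections over $\Delta$, one obtains
\[ i^*\psi_1=(2g-2)f_1+\Delta,\quad i^*\psi_2=(2g-2)f_2+\Delta,\quad i^*\delta_{0:\{1,2\}}=\Delta, \]
where $f_j$ denotes a fiber of the $j$th projection, while $i^*\lambda=0$ and $i^*\delta_{i:S}=0$ for the remaining boundary divisors (since $C$ is a fixed smooth curve). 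Combined with the intersection numbers $f_j^2=0$, $f_1\cdot f_2=f_j\cdot\Delta=1$, and $\Delta^2=2-2g$ on $C\times C$, the projection formula $[B_E]\cdot[E]=(i^*[E])^2$ yields the stated expression after a direct algebraic simplification.

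For the covering curve condition, as $C$ ranges over $\mathcal{M}_g$ the images $i_C(C\times C)$ sweep out an open dense subset of $\Mbar{g}{2}$, so the preimages $i_C^{-1}(E)$ produce curves in $E$ whose union is dense and whose numerical class is $[B_E]$. The main obstacle is the irreducibility caveat in the statement: for generic $C$, the divisor $i_C^{-1}(E)$ on $C\times C$ must give rise to irreducible curves in $E$. Whenever $E\neq\delta_{0:\{1,2\}}$, one first observes that $E\cap\delta_{0:\{1,2\}}$ has codimension at least two in $\Mbar{g}{2}$, so $\Delta$ is not a component of $i_C^{-1}(E)$; irreducibility of $i_C^{-1}(E)$ for very general $C$ then follows from a monodromy argument on the total incidence variety $\{(C,x,y):i_C(x,y)\in E\}$ in the universal family, using that this total space is irreducible (birational to $E$) and maps dominantly to $\mathcal{M}_g$. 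The case $E=\delta_{0:\{1,2\}}$ is immediate, since then $i_C^{-1}(E)=\Delta$ is itself irreducible.
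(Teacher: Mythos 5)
Your overall strategy is the same as the paper's: compute $[B_E]\cdot[E]$ by pulling back to $C\times C$ (your formulas $i^*\psi_j=(2g-2)f_j+\Delta$, $i^*\delta_{0:\{1,2\}}=\Delta$ and the resulting quadratic form agree with Proposition~\ref{prop:BDD}), and then feed the resulting covering curve into Lemma~\ref{lemma:covering}. That part is fine, as is your observation that $\Delta$ is not a component of $i_C^{-1}(E)$ when $E\neq\delta_{0:\{1,2\}}$.

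The genuine gap is the irreducibility step. You claim that irreducibility of $i_C^{-1}(E)$ for very general $C$ ``follows from a monodromy argument'' because the total incidence variety $I=\{(C,x,y):i_C(x,y)\in E\}$ is irreducible and dominates $\mathcal{M}_g$. That implication is false: an irreducible variety dominating an irreducible base can have reducible general fibre --- take the Stein factorization $I\to \widetilde{\mathcal{M}}\to\mathcal{M}_g$; if the finite part has degree $m>1$ (which irreducibility of $I$ does not rule out, since connected covers of open subsets of $\mathcal{M}_g$ of degree $>1$ exist), the general fibre has exactly $m$ connected components, permuted transitively by monodromy. What transitivity of monodromy on components actually buys you is weaker but sufficient, and is precisely what the paper uses: since numerical classes are locally constant in families, components lying in a single monodromy orbit all have the same class; equivalently, if two components of $i_C^{-1}(E)$ had distinct classes they would be canonically distinguishable, and the closures of the two sub-families over varying $C$ would exhibit $E$ as a union of two proper closed subsets, contradicting irreducibility of $E$. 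One then applies Lemma~\ref{lemma:covering} to the common component class $[B_E]/m$, which still satisfies $([B_E]/m)\cdot[E]<0$, so rigidity and extremality follow. Your proof needs this replacement of ``the fibre is irreducible'' by ``all components of the fibre have the same class.''
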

In Example~\ref{ex1} and \ref{ex2} we provide simple applications of this theorem, however, the utility of the idea is in using such a curve to identify base loci in the linear system associated to other effective divisors. If $[B_E]\cdot [D]<0$ for some other effective divisor $[D]$, then $E$ appears with some positive multiplicity in the base locus of $|kD|$ for all $k>0$. Applying this to the covering curves we have constructed in $\Mbar{g}{2}$ we obtain the following.
\begin{restatable}{prop}{BaseLociBD}
\label{Prop:BaseLociBD}
If $D$ is an effective divisor in $\Mbar{g}{2}$ for $g\geq 2$ and
$$(4g-4)c_{\psi_1}c_{\psi_2}+(c_{\psi_1}+c_{\psi_2})^2-c_{0:\{1,2\}}^2  <0$$
then there exists an irreducible rigid and extremal divisor $E$ such that $[B_{E}]\cdot[E]<0$ and $[B_{E}]\cdot [D]<0$. Further, for any $k>0$,
$$|kD|=\left|k\left(D-\frac{[B_{E}]\cdot [D]}{[B_{E}]\cdot[E]}[E]\right)\right|+   k\frac{[B_{E}]\cdot [D]}{[B_{E}]\cdot[E]}E.$$
\end{restatable}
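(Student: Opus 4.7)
The plan is to find an irreducible component $E$ of $D$ satisfying both $[B_E]\cdot[E]<0$ and $[B_E]\cdot[D]<0$, so that Proposition~\ref{Prop:Extremal} identifies $E$ as rigid and extremal with $[B_E]$ a covering-curve class, and then to extract $E$ from the base locus of $|kD|$ via the standard covering-curve argument.

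The expression in Proposition~\ref{Prop:Extremal} is a quadratic form in the divisor class (arising from the self-intersection $(i^*X)^2$ on $C\times C$ via the projection formula), so the hypothesis on $D$ is equivalent to $[B_D]\cdot[D]<0$. Writing $D=\sum_i a_iE_i$ as a sum over distinct irreducible effective components with $a_i>0$, linearity of $i^*$ gives $[B_D]=\sum_i a_i[B_{E_i}]$, and each $[B_{E_i}]$ is effective or zero by Proposition~\ref{Thm:eff}. For general $C$ and for $i\neq j$, the codimension-two locus $E_i\cap E_j$ meets the two-dimensional fibre $S=i(C\times C)$ in a zero-dimensional (possibly empty) set, so the representative curve $B_{E_i}\subset E_i\cap S$ meets $E_j$ properly and $[B_{E_i}]\cdot[E_j]\geq 0$. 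Expanding
$$[B_D]\cdot[D]=\sum_{i,j}a_ia_j[B_{E_i}]\cdot[E_j]<0$$
therefore forces $[B_{E_i}]\cdot[E_i]<0$ for some $i$; set $E=E_i$. Proposition~\ref{Prop:Extremal} applies, and the same off-diagonal nonnegativity also yields $[B_E]\cdot[D]\leq a_i[B_E]\cdot[E]<0$.

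For the base locus equality, put $\alpha=\frac{[B_E]\cdot[D]}{[B_E]\cdot[E]}>0$. For $k>0$ (taken divisible enough that $k\alpha\in\ZZ_{\geq 0}$) and any effective $D'\in|kD|$, decompose $D'=mE+F$ with $m\geq 0$ and $F$ effective not containing $E$ as an irreducible component. The covering curves for $E$ sweep a Zariski dense subset, so a general representative of $[B_E]$ is not contained in $F$ and $[B_E]\cdot[F]\geq 0$. Then
$$k\,[B_E]\cdot[D]=[B_E]\cdot[D']=m\,[B_E]\cdot[E]+[B_E]\cdot[F]\geq m\,[B_E]\cdot[E],$$
and dividing by the negative $[B_E]\cdot[E]$ gives $m\geq k\alpha$. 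Hence every effective divisor in $|kD|$ contains $k\alpha E$, yielding
$$|kD|=\left|k(D-\alpha E)\right|+k\alpha E.$$

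The main obstacle is the nonnegativity claim $[B_{E_i}]\cdot[E_j]\geq 0$ for distinct irreducible components, which rests on the dimensional analysis of $E_i\cap E_j\cap S$ on a general fibre of $\Mbar{g}{2}\to\Mgb$; once that geometric input is secured, the remainder is bookkeeping with the intersection form and the standard covering-curve base locus manipulation.
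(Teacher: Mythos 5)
Your overall strategy is the paper's: decompose $D$ into distinct irreducible components $D=\sum_i a_iE_i$, use the quadratic form on $C\times C$ together with off-diagonal nonnegativity to isolate a component $E$ with $[B_E]\cdot[E]<0$, then run the covering-curve base-locus argument. But the step where you select $E$ and then deduce $[B_E]\cdot[D]<0$ contains a genuine error. From $[B_E]\cdot[E_j]\geq 0$ for $j\neq i$ one gets
$$[B_E]\cdot[D]=a_i[B_E]\cdot[E]+\sum_{j\neq i}a_j[B_E]\cdot[E_j]\;\geq\; a_i[B_E]\cdot[E],$$
a \emph{lower} bound by a negative number; your claimed inequality $[B_E]\cdot[D]\leq a_i[B_E]\cdot[E]$ points the wrong way, and $[B_E]\cdot[D]<0$ simply does not follow for an arbitrary component with negative diagonal entry. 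Concretely, writing $M_{ij}=[B_{E_i}]\cdot[E_j]=i^*[E_i]\cdot i^*[E_j]$ (a symmetric matrix by the projection formula) and taking $D=E_1+E_2$, the values $M_{11}=-1$, $M_{12}=M_{21}=5$, $M_{22}=-100$ satisfy every property your argument invokes ($\sum_{i,j}M_{ij}<0$, nonnegative off-diagonals, $M_{11}<0$), yet $[B_{E_1}]\cdot[D]=4>0$; only $E_2$ would serve as the required $E$.

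The repair is to reverse the order of the two deductions, which is exactly what the paper does. Since $[B_D]\cdot[D]=\sum_i a_i\bigl([B_{E_i}]\cdot[D]\bigr)<0$ with all $a_i>0$, some component satisfies $[B_{E_i}]\cdot[D]<0$; call it $E$. Then the off-diagonal nonnegativity applied to $[B_E]\cdot[D]=a_i[B_E]\cdot[E]+\sum_{j\neq i}a_j[B_E]\cdot[E_j]<0$ forces $a_i[B_E]\cdot[E]<0$, hence $[B_E]\cdot[E]<0$, and Proposition~\ref{Prop:Extremal} applies. With that reordering the remainder of your write-up --- the justification of $[B_{E_i}]\cdot[E_j]\geq 0$ via the general fibre $C\times C$, and your re-derivation of Lemma~\ref{lemma:base} for the base-locus identity --- goes through as written and matches the paper's route.
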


With this proposition we proceed to prove Theorem~\ref{Thm:dual}. Consider an irreducible effective divisor $D$ in $\Mbar{g}{n}$ that intersects the interior $\M{g}{n}$ with $[F]\cdot [D]=0$. If $D$ contains the point $[C,p_1,\dots,p_{n}]\in \M{g}{n}$ then consider the curve $B$ equal to the fibre of $[C,p_1,\dots,p_{n-1}]$ under the morphism $\pi:\Mbar{g}{n}\longrightarrow \Mbar{g}{n-1}$ that forgets the $n$th point. As $[B]=[F]$ we have $[B]\cdot[D]=0$ so necessarily as $B$ is irreducible and intersects $D$ set theoretically, $B$ is contained in the support of $D$ and hence the divisor $D$ is supported on the pullback of an effective divisor under $\pi$. Further, if $D$ is an irreducible divisor in $\Mbar{g}{n}$ that does not intersect the interior $\M{g}{n}$ with $[F]\cdot [D]=0$ then $D$ is an irreducible component of the boundary other than $\delta_{0:\{i,n\}}$ for $i=1,\dots,n-1$. This shows 
$$\text{rank}([F]^{\vee}\otimes\RR)\geq \rho(\Mbar{g}{n})-n$$
and we are left to consider strictly pseudoeffective divisors.

Restricting first to the case $n=2$, consider a strictly pseudoeffective divisor class $[D]$ with $[F]\cdot[D]=0$ such that $[D]$ lies outside of $\RR\langle\psi_1-\delta_{0:\{1,2\}},\lambda,\delta_0, \{\delta_{i:\emptyset}\}_{i=1}^{g-1},\{ \delta_{i:\{1\}}  \}_{i=1}^{g-1}  \rangle$. Then the class $[D]$ must be of the form  
$$[D]=((1-2g)c_{\psi_2}-c_{0:\{1,2\}})\psi_1+c_{\psi_2}\psi_2+c_{0:\{1,2\}}\delta_{0:\{1,2\}}+c_{\lambda}\lambda+c_0\delta_0+\sum_{i=1}^{g-1}c_{i:\{1\}}\delta_{i:\{1\}}+\sum_{i=1}^{g-1}c_{i:\emptyset } \delta_{i:\emptyset }$$
with $c_{\psi_2}\ne 0$. But then
$$[B_D]\cdot [D]=-8c_{\psi_2}^2(g-1)^2g<0$$
and applying Proposition~\ref{Prop:BaseLociBD} to effective divisors limiting to $[D]$ implies the existence of a rigid extremal effective divisor $E$ with $[F]\cdot [E]=0$ and nonzero coefficient of $\psi_2$ providing a contradiction and proving Theorem~\ref{Thm:dual} for $n=2$.

Though the fibre of the forgetful morphism $\Mbar{g}{n}\longrightarrow\Mbar{g}{n-1}$ over a general point is irreducible, for $n\geq3$ the fibre over a general point in $\delta_{0:\{1,\dots,n-1\}}$ is reducible with two irreducible components. Hence $[F]$ can be written as an effective sum of these two curves which form covering curves for $\delta_{0:\{1,\dots,n-1\}}$ and $\delta_{0:\{1,\dots,n\}}$. Any extremal $[D]\in \overline{\text{Eff}}^1(\Mbar{g}{n})$ not equal to $\delta_{0:\{1,\dots,n-1\}}$ or $\delta_{0:\{1,\dots,n\}}$ must have nonnegative intersection with each of these covering curves and further $[F]\cdot[D]=0$ implies each of these intersections must in fact be zero. Hence if $\pi:\Mbar{g}{2}\longrightarrow \Mbar{g}{n}$ is the morphism gluing in a general $n$-pointed rational curve at the first point, $[F]\cdot\pi^*[D]=0$. Lemma~\ref{Lem:ps} shows in this situation $\pi^*[D]$ is pseudoeffective and hence by pulling back the class of $[D]$ we have $c_{\psi_n}=0$ providing a second linearly independent condition on $[F]^\vee\otimes\RR$ for $n\geq 3$ completing the proof of Theorem~\ref{Thm:dual}.

\subsection{Proof of Theorem~\ref{Thm:Extremal}}
To show that $[F]$ is an extremal nef curve we appeal to two interesting families of divisors from the theory of Hurwitz spaces. We define $D_k$ and $E_k$ as the following loci in $\M{g}{2}$
\begin{equation*}
D_k:={\left\{ [C,p_1,p_2]\in\M{g}{2} \big| \exists [C,p_1,p_2,q_1,\dots,q_{g-1}]\in\M{g}{g+1}, \OO_C((k(g-1)+1)p_1-p_2-k\sum_{i=1}^{g-1}q_i)\sim\OO_C   \right\}}
\end{equation*}
\begin{equation*}
E_k:={\left\{ [C,p_1,p_2]\in\M{g}{2} \big| \exists [C,p_1,p_2,q_1,\dots,q_{g-1}]\in\M{g}{g+1}, \OO_C((k(g-1)-1)p_1+p_2-k\sum_{i=1}^{g-1}q_i)\sim\OO_C   \right\}}.
\end{equation*}
Taking the closure we obtain two families of divisors $\overline{D}_k$ and $\overline{E}_k$ for $k\geq 2$ in $\Mbar{g}{2}$. In Theorem~\ref{Int} we compute the intersection of these divisors with a number of curves via Jacobian of a general curve and the admissible covers compactification of Hurwitz spaces. The table below gives two values of interest to us for each divisor class. 
\begin{center}
\begin{tabular}{|p{1.5cm}|p{3cm}|p{4cm}|  }
 \hline
  $[*]$&$[F]\cdot[*]$&Coefficient of $\psi_2$ \\
 \hline
 $[\overline{D}_k]$ &$(k^{2g - 2}-1) g$ &$\frac{1}{2}(1-k)k^{2g-2}$  \\
$[\overline{E}_k]$&$(k^{2g - 2}-1) g$  &$\frac{1}{2}(k+1)k^{2g-2}$  \\
\hline
\end{tabular}
\end{center}
These two families of divisor classes are employed because the coefficients of $\psi_2$ have opposite parity and the magnitude of the coefficient of $\psi_2$ dominates the intersection of the divisors with $[F]$ asymptotically in $k$. 

If $[F]$ is not extremal it is the sum of nef curve classes with zero intersection with $[F]^\vee$. A non-trivial nef decomposition of $[F]$ implies the existence of a nef curve class $[F^t]$ for a fixed $t\in\RR\setminus\{0\}$ with intersections
$$[F^t]\cdot\psi_1=1,\hspace{1cm}[F^t]\cdot \psi_2=(2g-1)+t,\hspace{1cm}[F^t]\cdot\delta_{0:\{1,2\}}=1$$
and zero intersection with the other standard generators of $\Pic(\Mbar{g}{2})$.  But for fixed $t>0$ 
$$ [F^t]\cdot [\overline{D}_k]= (k^{2g - 2}-1) g +t\frac{1}{2}(1-k)k^{2g-2} =\frac{-t}{2}k^{2g-1}+\OO(k^{2g-2})<0 \text{  for  }k\gg0,$$
while for fixed $t<0$ 
$$ [F^t]\cdot [\overline{E}_k]=(k^{2g - 2}-1) g   +t \frac{1}{2}k^{2g-2}(k+1)=\frac{t}{2}k^{2g-1}+\OO(k^{2g-2}) <0 \text{  for  }k\gg0.$$
Hence $[F^t]$ is nef if and only if $t=0$, which completes the proof of Theorem~\ref{Thm:Extremal} in the case that $n=2$. For $n\geq 3$ we consider the the forgetful morphisms
$$\pi_j:\Mbar{g}{n}\longrightarrow\Mbar{g}{2}$$
for $j=1,\dots,n-1$ that forgets all but the $j$th and $n$th points. As ${\pi_j}_*[F]=[F]$ is an extremal nef curve and the pushforward of a nef curve is nef, if $[F']$ is a nef curve class appearing in a nef decomposition of $[F]$ then ${\pi_j}_*[F']=k_j[F]$ for some $0\leq k_j\leq 1$. This provides enough linearly independent conditions to show that no nontrivial nef decomposition exists and indeed $[F]$ is an extremal nef curve class.

\begin{ack}
I am grateful to Dawei Chen and Martin M\"{o}ller for useful discussions related to this paper and I thank the anonymous referees for many useful comments and improvements. The author was supported by the Alexander von Humboldt Foundation during the preparation of this article.
\end{ack}
\section{Intersection theory on $\Mbar{g}{n}$}
In this section we present a number of preliminaries on the intersection theory on $\Mbar{g}{n}$ followed by new results to be used in the following sections.
\subsection{The Picard group of $\Mbar{g}{n}$}\label{Pic}
The Picard group $\Pic(\Mbar{g}{n})\otimes\RR=N^1(\Mbar{g}{n})$ is generated by $\lambda$, the first Chern class of the Hodge bundle, $\psi_i$ the first Chern class of the cotangent bundle on $\Mbar{g}{n}$ associated with the $i$th marking for $i=1,\dots,n$ and the classes of the irreducible components of the boundary. We denote by $\delta_{0}$ the class of the locus of curves with a non-separating node and $\delta_{i:S}$ for $0\leq i\leq g$, $S\subset\{1,\dots,n\}$ the class of the locus of curves with with a separating node, such that one component has genus $i$ and contains precisely the markings of $S$. Hence $\delta_{i:S}=\delta_{g-i:S^c}$ and we require $|S|\geq 2$ for $i=0$ and $|S|\leq n-2$ for $i=g$. For $g\geq 3$ these divisors freely generate $\Pic(\Mbar{g}{n})\otimes\RR$. When $g=2$ the classes $\lambda$, $\delta_0$ and $\delta_1$ generate $\Pic(\overline{\mathcal{M}}_2)\otimes\RR$ with the relation
\begin{equation*}
\lambda=\frac{1}{10}\delta_0+\frac{1}{5}\delta_1.
\end{equation*}
Pulling back this relation under the forgetful morphism forgetting all marked points gives the only relation on these generators in $\Pic(\overline{\mathcal{M}}_{2,n})\otimes \RR$. See~\cite{AC} or \cite{HarrisMorrison} for an introduction to the divisor theory of $\Mbar{g}{n}$.

\subsection{Cones of cycles and duality}
For a projective variety $X$, let $N^1(X)$ denote the $\RR$-vector space of divisors modulo numerical equivalence and $N_1(X)$ the $\RR$-vector space of curves modulo numerical equivalence. The cycles in $N^1(X)$ that can be written as a positive sum of effective divisors form a convex cone in $N^1(X)$ known as the \emph{effective cone} or \emph{effective cone of divisors} denoted $\text{Eff}^1(X)$ the closure of this cone is known as the \emph{pseudoeffective cone} or \emph{pseudoeffective cone of divisors} and is denoted $\overline{\text{Eff}}^1(X)$. The \emph{effective cone of curves} and \emph{pseudoeffective cone of curves} are defined similarly and denoted by $\text{Eff}_1(X)$ and $\overline{\text{Eff}}_1(X)$ respectively. 

A pseudoeffective divisor that has non negative intersection with all effective curves is known as nef and the closed cone of all such divisors forms a subcone of $\overline{\text{Eff}}^1(X)$ we denote by $\overline{\text{Nef}}^1(X)$.  The cone of nef curves $\overline{\text{Nef}}_1(X)$ is similarly defined as the pseudoeffective curves with non-negative intersection with all pseudoeffective divisors. 

The nef cone of curves $\overline{\text{Nef}}_1(X)$ is dual to the pseudoeffective cone of divisors $\overline{\text{Eff}}^1(X)$ while the nef cone of divisors $\overline{\text{Nef}}^1(X)$ is dual to the pseudoeffective cone of curves $\overline{\text{Eff}}_1(X)$.

\subsection{Rigitity, extremality and base loci}
A cycle $[E]$ is \emph{extremal} in a cone if it cannot be written as a sum 
$$[E]=a[A]+b[B]$$
for $a,b>0$ of cycles $[A]$ and $[B]$ also lying in the cone unless $[E],[A]$ and $[B]$ are all proportional classes. An effective cycle $E$ is \emph{rigid} if every cycle with class $m[E]$ for $m>0$ is supported on $E$. For example, an effective divisor $D$ is rigid if 
$$\dim H^0(X, mD)=1$$
for all $m>0$.

If irreducible curves with numerical class equal to $B$ cover a Zariski  dense subset of an irreducible effective divisor $D$, then $B$ is known as a \emph{covering curve} for $D$. The following lemma~\cite[Lemma 4.1]{ChenCoskun} provides a well-known trick for showing a divisor is rigid and extremal.
\Covering*

The utility of covering curves is not restricted to showing extremality of irreducible effective divisors. Covering curves can also be used to identify rigid divisors in the base locus of the linear system associated to other effective divisors.

\begin{restatable}{lem}{BaseLocus}
\label{lemma:base}
If $D$ is an effective divisor and $[B]$ is a covering curve class for an irreducible effective divisor $E$ with $[B]\cdot[E]<0$ and $[B]\cdot [D]<0$ then for any $k>0$,
$$|kD|=\left|k\left([D]-\frac{[B]\cdot [D]}{[B]\cdot[E]}[E]\right)\right|+   k\frac{[B]\cdot [D]}{[B]\cdot[E]}E $$
\end{restatable}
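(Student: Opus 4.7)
The plan is to show that every effective divisor in the linear system $|kD|$ must contain $E$ with multiplicity at least $c := k\,\dfrac{[B]\cdot[D]}{[B]\cdot[E]}$, from which the claimed splitting is immediate. First I will take an arbitrary $D'' \in |kD|$ and write $D'' = mE + D'''$, where $m \in \ZZ_{\geq 0}$ is the multiplicity of $E$ in $D''$ and $D'''$ is an effective divisor whose support does not contain $E$.

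Next I will invoke the covering curve hypothesis. Because irreducible curves of numerical class $[B]$ sweep out a Zariski dense subset of $E$ and $D''' \cap E$ is a proper closed subset of $E$, a general representative $B_t$ of the covering family is not contained in $D'''$. The intersection $B_t \cdot D'''$, computed set-theoretically with multiplicity, is therefore nonnegative, so $[B]\cdot[D''']\geq 0$. Substituting this into
$$k\,[B]\cdot[D]\;=\;[B]\cdot[D'']\;=\;m\,[B]\cdot[E]+[B]\cdot[D''']$$
and using $[B]\cdot[E]<0$ (which flips the inequality when we divide) I obtain $m\geq c$; note that $c>0$ since both $[B]\cdot[D]$ and $[B]\cdot[E]$ are negative.

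The containment $|kD|\supseteq \bigl|k[D]-c[E]\bigr|+cE$ is immediate, since adding $cE$ to any effective divisor $F$ of class $k[D]-c[E]$ produces a member of $|kD|$. The reverse containment is exactly the multiplicity bound $m\geq c$ just established, which lets every $D''\in|kD|$ be rewritten as $(D''-cE)+cE$ with $D''-cE$ effective of the right class.

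The main subtlety I anticipate is justifying rigorously that a general $B_t$ is not contained in $D'''$: this uses that the covering family is positive-dimensional (implicit in the definition of covering curve) together with the fact that the Zariski dense covering locus inside $E$ cannot lie in the proper closed subset $D'''\cap E$. A secondary point is integrality: if $c = k\,[B]\cdot[D]/[B]\cdot[E]$ is not an integer, the identity should be interpreted either after passing to a sufficiently divisible multiple of $k$, or in terms of a $\QQ$-divisor decomposition of the section ring; the numerical argument is unchanged.
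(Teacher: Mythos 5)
Your argument is correct and follows the same idea as the paper's (one-line) proof: curves of class $[B]$ sweeping out $E$ force $E$ into the base locus of $|kD|$, and the multiplicity bound $m\geq k\,[B]\cdot[D]/[B]\cdot[E]$ you extract from $[B]\cdot[D''']\geq 0$ is exactly the quantitative content the paper leaves implicit. Your closing remarks on the genericity of $B_t$ and on integrality (interpreting the identity for $\QQ$-divisors or sufficiently divisible $k$) are appropriate and, if anything, more careful than the original.
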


\begin{proof}
As irreducible curves with class equal to $[B]$ cover a Zariski dense subset of $E$, if $[B]\cdot [D]<0$ for any effective divisor $D$ then $|D|$ must contain an unmovable component supported on $E$.
\end{proof}

The setting of $\Mbar{g}{2}$ offers a method of constructing a potential covering curve for any effective divisor.

\begin{defn}
Let $C$ be a general smooth genus $g$ curve and
$$i:C\times C\longrightarrow \Mbar{g}{2}$$
the natural morphism. For a fixed divisor $[D]$ in $\Mbar{g}{2}$, define
$$[B_D]:=i_*i^*[D].$$
\end{defn}

\begin{prop}\label{Thm:eff}
If $D$ is effective, then $[B_D]$ is effective or zero
\end{prop}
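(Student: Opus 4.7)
The plan is to exhibit $[B_D]$ as represented by an effective $1$-cycle on $\Mbar{g}{2}$. The strategy rests on two straightforward facts: pullback of an effective Cartier divisor along a morphism is effective whenever no component of the target divisor contains the image, and pushforward along a proper morphism preserves effectivity of cycles. The class $[B_D]$ is independent of the choice of general curve $C$ (the surfaces $i_C(C\times C)$ form a flat family over $\Mg$, so their numerical classes are equal), so it suffices to verify the conclusion for one well-chosen $C$.

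I would first decompose $D = \sum_j a_j D_j$ as a positive combination of distinct irreducible divisors. The main claim to establish is that for a general curve $C$, the image $i(C\times C)$ fails to be contained in any single $D_j$. To see this, consider the forgetful morphism $\pi:\Mbar{g}{2}\to \Mg$: the image $i_C(C\times C)$ is (up to automorphisms of $C$) the fiber $\pi^{-1}([C])$, and as $[C]$ ranges over $\Mg$ these fibers sweep out the dense open locus $\M{g}{2}$. Consequently, if $i_C(C\times C)\subset D_j$ were to hold for every $[C]\in\Mg$, then $D_j$ would contain $\M{g}{2}$, forcing $D_j=\Mbar{g}{2}$ and contradicting that $D_j$ is an irreducible divisor. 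Thus each bad locus $\{[C]\in\Mg : i_C(C\times C)\subset D_j\}$ is a proper closed subset of $\Mg$, and since there are only finitely many components $D_j$, a general $C$ avoids all of them simultaneously.

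For such $C$, each $i^*[D_j]$ is an honest effective Cartier divisor on the surface $C\times C$. The morphism $i$ is generically finite onto its image (even for $g=2$, where the hyperelliptic involution only makes $i$ two-to-one generically), so the pushforward $i_*i^*[D_j]$ of an effective $1$-cycle is again an effective $1$-cycle in $\Mbar{g}{2}$: each irreducible curve in the support either maps to a positive multiple of its image curve or gets contracted and contributes zero. Summing,
\[
[B_D] \;=\; \sum_j a_j\, i_*i^*[D_j]
\]
is a non-negative combination of effective $1$-cycle classes, hence effective or zero.

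I do not anticipate a genuine obstacle; the only place requiring attention is the containment-avoidance claim in the second paragraph, which is immediate from the covering property of $\pi$. Everything else is a formal consequence of the effectivity of pullback of Cartier divisors along maps not factoring through the divisor's support, together with the effectivity of proper pushforward.
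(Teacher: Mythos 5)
Your argument is correct and is essentially the paper's proof written out in full: the paper's one-line justification ("as $C$ is general, $i^*[D]$ is effective or zero") is exactly your containment-avoidance observation that the fibres of $\Mbar{g}{2}\to\Mgb$ sweep out a dense open set, so a general fibre lies in no component of $D$, after which effectivity of pullback and proper pushforward finishes the job. The only cosmetic point is that $i^*[D_j]$ may be zero rather than a nonzero effective divisor (e.g.\ for boundary components disjoint from $i(C\times C)$), which your final summation already accommodates.
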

\begin{proof}
As the curve $C$ is general, the pullback $i^*[D]$ of an effective divisor $D$ is either effective or zero.
\end{proof}

\begin{prop}\label{prop:BDD}Let
$$[D]=c_{\psi_1}\psi_1+c_{\psi_2}\psi_2+c_{0:\{1,2\}}\delta_{0:\{1,2\}}+c_\lambda\lambda+c_0\delta_0+\sum_{i=1}^{g-1}c_{i:\emptyset}\delta_{i:\emptyset}+\sum_{i+1}^{g-1}c_{i:\{1\}}\delta_{i:\{1\}}$$
then
$$[B_D]\cdot [D]=(2g-2)\left((4g-4)c_{\psi_1}c_{\psi_2}+(c_{\psi_1}+c_{\psi_2})^2-c_{0:\{1,2\}}^2  \right)$$
\end{prop}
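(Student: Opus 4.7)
The plan is to reduce the computation to an intersection on the smooth surface $C\times C$ via the projection formula. Since $[B_D] = i_*i^*[D]$, one has
$$[B_D]\cdot[D] \;=\; i_*i^*[D]\cdot[D] \;=\; (i^*[D])^2,$$
where the right-hand side is computed on $C\times C$. So the problem becomes: determine $i^*$ on each generator of $\Pic(\Mbar{g}{2})\otimes\RR$ and then carry out a quadratic expansion.

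For a general smooth $C$ (trivial automorphism group, irreducible, smooth), the map $i$ factors through the fibre of the forgetful morphism $\Mbar{g}{2}\to\Mgb$ over the single point $[C]$, so $i^*\lambda=0$. Because the image of $i$ meets no boundary component except when the two marked points collide, one also has $i^*\delta_0=i^*\delta_{i:\emptyset}=i^*\delta_{i:\{1\}}=0$ for $i\geq 1$. On the other hand $\delta_{0:\{1,2\}}$ is met transversely along the diagonal (where a rational tail sprouts), giving $i^*\delta_{0:\{1,2\}}=\Delta$.

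The $\psi$-classes require a small boundary correction. Let $\pi:\Mbar{g}{2}\to\Mbar{g}{1}$ be the morphism forgetting the second point; the standard comparison formula reads
$$\psi_1 = \pi^*\psi_1 + \delta_{0:\{1,2\}}.$$
The composition $\pi\circ i$ factors as the projection $\pi_1:C\times C\to C$ followed by $\phi:C\to\Mbar{g}{1}$, $p\mapsto[C,p]$, and the latter pulls $\psi_1$ back to $K_C$ (of degree $2g-2$). Hence $i^*\pi^*\psi_1 = (2g-2)F_1$, where $F_j$ denotes the class of a fibre of $\pi_j$, and therefore
$$i^*\psi_1 = (2g-2)F_1+\Delta, \qquad i^*\psi_2 = (2g-2)F_2+\Delta,$$
the second identity following by symmetry.

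Combining these, one has
$$i^*[D] = c_{\psi_1}(2g-2)F_1 + c_{\psi_2}(2g-2)F_2 + (c_{\psi_1}+c_{\psi_2}+c_{0:\{1,2\}})\Delta,$$
and one expands $(i^*[D])^2$ using the standard intersection numbers $F_1^2=F_2^2=0$, $F_1\cdot F_2 = F_1\cdot\Delta = F_2\cdot\Delta=1$, and $\Delta^2 = 2-2g$. The mixed terms of the form $c_{\psi_i}c_{0:\{1,2\}}$ coming from $F_i\cdot\Delta$ precisely cancel the corresponding cross terms in the expansion of $(c_{\psi_1}+c_{\psi_2}+c_{0:\{1,2\}})^2\Delta^2$, leaving the asserted formula after a short simplification. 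The only subtle point is the $+\Delta$ correction in $i^*\psi_i$ coming from the boundary comparison; once that is in place the rest is a routine quadratic expansion.
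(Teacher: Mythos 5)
Your proposal is correct and follows essentially the same route as the paper: reduce to $(i^*[D])^2$ on $C\times C$ via the projection formula, identify $i^*\psi_j=(2g-2)f_j+\Delta$ and $i^*\delta_{0:\{1,2\}}=\Delta$ with all other generators pulling back to zero, and expand using the standard intersection numbers on $C\times C$. The only (harmless) difference is that you justify $i^*\psi_j$ via the comparison formula $\psi_1=\pi^*\psi_1+\delta_{0:\{1,2\}}$ for the forgetful map, whereas the paper obtains the same identity by computing intersections of the test curves $i_*f_1$, $i_*f_2$, $i_*\Delta$ with the divisor generators.
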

\begin{proof} Observe that the pullback of all standard generators of $\Pic(\Mbar{g}{2})\otimes\RR$ are zero other than $\psi_1,\psi_2$ and $\delta_{0:\{1,2\}}$. Hence
$$[B_D]\cdot [D]=i_*i^*[D]\cdot[D]=i^*[D]\cdot i^*[D]=(c_{\psi_1}i^*\psi_1+c_{\psi_2}i^*\psi_2+c_{0:\{1,2\}}i^*\delta_{0:\{1,2\}})^2$$
If we let $\Delta\subset C\times C$ be the diagonal and $f_i$ for $i=1,2$ be the numerical class of the fibre of the projection of $C\times C$ onto the $i$th component. Then
$$ f_i\cdot f_i=0,\hspace{0.7cm} f_i\cdot\Delta=1,\hspace{0.7cm} f_1\cdot f_2=1,\hspace{0.7cm} \Delta^2=2-2g$$
and further by standard computation of the intersections of the curves $i_*f_1, i_*f_2$ and $i_*\Delta$ with divisors $\psi_1,\psi_2$ and $\delta_{0:\{1,2\}}$ we obtain
$$i^*\psi_i=(2g-2)f_i+\Delta,\hspace{0.7cm} i^*\delta_{0:\{1,2\}}=\Delta.$$
Hence the result holds.
\end{proof}
Restricting to irreducible divisors we obtain the following.
\CoveringExtremal*

\begin{proof}
If $E$ is an irreducible effective divisor and $[B_E]\cdot [E]<0$ then $[B_E]$ is effective by Proposition~\ref{Thm:eff}. If for a general curve $C$, the effective curve $i_*i^*D$ is reducible, then each component has the same class. Otherwise the components would be distinguishable, and taking the closure of each of the component curves over varying general $[C]\in \M{g}{}$ we would obtain multiple connected components of $E$ contradicting the irreducibility of $E$. By Lemma~\ref{lemma:covering} we have $E$ is rigid and extremal.
\end{proof}
We include a couple of simple examples of applications of this proposition, both already known to be rigid and extremal.

\begin{ex}\label{ex1}
Consider the irreducible effective divisor $\delta_{0:\{1,2\}}$. Observe
$$ [B_{\delta_{0:\{1,2\}}}]\cdot \delta_{0:\{1,2\}}= 2-2g$$
and hence by Proposition~\ref{Prop:Extremal} this divisor is rigid and extremal.
\end{ex}

\begin{ex}\label{ex2}
Consider the irreducible effective divisor $H$ in $\Mbar{2}{2}$ obtained as the closure of the locus of $[C,p_1,p_2]\in \M{2}{2}$ such that $p_1$ and $p_2$ are conjugate under the unique hyperelliptic involution on $C$. The class of $H$ is known to be (see~\cite{HarrisMorrison})
$$[H]=\psi_1+\psi_2-\lambda -3\delta_{0:\{1,2\}} -\delta_{1:\emptyset}  $$
giving
$$ [B_{H}]\cdot [H]=-2 $$
and hence by Proposition~\ref{Prop:Extremal} this divisor is rigid and extremal. 
\end{ex}

The utility of the construction of these covering curves in $\Mbar{g}{2}$ will be in using such curves to identify base loci in the linear system associated to other effective divisors. Appealing to Lemma~\ref{lemma:base} we obtain.

\BaseLociBD*

\begin{proof}
Consider an effective divisor $D$ such that $[B_D]\cdot [D]<0$. Consider an effective decomposition of $[D]$ into irreducible distinct divisors $D_i$
$$[D]=\sum c_i[D_i]$$
with $c_i>0$. Then 
$$[B_D]=\sum c_i [B_{D_i}].$$
But $[B_D]\cdot [D]<0$ implies $[B_{D_i}]\cdot [D]<0$ for some $D_i$, which we label $E$. Further, necessarily $[B_{E}]\cdot [E]<0$ and hence $E$ is rigid and extremal. Applying Lemma~\ref{lemma:base} completes the proof.
\end{proof}

\section{The rank of the pseudoeffective dual space}\label{DualSpace}
Let $F$ be a general fibre of the morphism $\pi:\Mbar{g}{n}\longrightarrow \Mbar{g}{n-1}$ that forgets the $n$th marked point. Irreducible curves with class equal to $[F]$ cover an open dense subset of $\Mbar{g}{n}$. Any effective divisor with negative intersection with $[F]$ would need to contain this dense subset and hence $[F]$ is a nef curve and must have nonnegative intersection with every pseudoeffective divisor. For a nef curve class $[B]$ we define the pseudoeffective dual space
$$[B]^\vee:=\{[D]\in \overline{\text{Eff}}^1(\Mbar{g}{2})\hspace{0.2cm}\big|\hspace{0.2cm} [B]\cdot[D]=0\}.$$
In this section we prove the following theorem.

\Dual*
 
 \begin{proof}
The case $n=2$ is Proposition~\ref{Prop:n2} and $n\geq3$ is Proposition~\ref{Prop:n3}.
 \end{proof}
 
First, we consider strictly effective divisors.

 \begin{prop}\label{Prop:eff}For $g\geq 2$ and $n\geq 1$
 $$\{[D]\in{\text{Eff}}(\Mbar{g}{n})\hspace{0.2cm}\big| \hspace{0.2cm}[F]\cdot [D]=0\}\otimes \RR=\RR\langle\{\psi_i-\delta_{0:\{i,n\}}\}_{i=1}^{n-1},\lambda,\delta_0, \{\delta_{i:\emptyset}\}_{i=1}^{g-1},\{ \delta_{i:\{1\}}  \}_{i=1}^{g-1}  \rangle.$$
 \end{prop}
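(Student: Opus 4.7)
The plan is to prove both inclusions of the claimed equality of $\RR$-spans.

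For the inclusion $\supseteq$, each listed generator is effective and annihilates $[F]$. The cotangent comparison formula $\pi^*\psi_i = \psi_i - \delta_{0:\{i,n\}}$ for $i = 1,\dots,n-1$ under the forgetful morphism $\pi:\Mbar{g}{n}\to\Mbar{g}{n-1}$, together with the projection formula $[F]\cdot\pi^*\alpha = \pi_*[F]\cdot\alpha = 0$, handles the classes $\psi_i-\delta_{0:\{i,n\}}$. For $\lambda,\delta_0,\delta_{i:\emptyset},\delta_{i:\{1\}}$, the vanishing $[F]\cdot * = 0$ follows by inspection: the fibre $F$ parametrises a fixed smooth genus $g$ curve with only $p_n$ varying, hence does not meet any boundary divisor that does not require $p_n$ to collide with another fixed marked point, and $\lambda$ restricts trivially since the underlying curve is constant along $F$.

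For the inclusion $\subseteq$, let $D$ be an irreducible effective divisor with $[F]\cdot[D]=0$. If $D$ lies in the boundary, direct inspection identifies the irreducible boundary divisors with nonzero $[F]$-intersection as precisely the $\delta_{0:\{i,n\}}$ for $i=1,\dots,n-1$; every other boundary divisor is (after applying $\delta_{i:S}=\delta_{g-i:S^c}$) among the listed generators. If instead $D$ meets the interior $\M{g}{n}$, pick a point $x=[C,p_1,\dots,p_n]\in D\cap\M{g}{n}$ and consider $B:=\pi^{-1}([C,p_1,\dots,p_{n-1}])$. For $g\geq 2$ this fibre is an irreducible smooth curve with $[B]=[F]$ passing through $x$. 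From $[B]\cdot[D]=0$ and $B\cap D\neq\emptyset$ we deduce $B\subseteq D$: otherwise $B\cap D$ would be a proper, nonempty, finite subset of $B$ contributing positive length to the intersection number. Letting $x$ vary over $D\cap\M{g}{n}$, the divisor $D$ is a union of $\pi$-fibres, so $D=\pi^{-1}(D')$ for an irreducible effective divisor $D'\subset\Mbar{g}{n-1}$, and $[D]=\pi^*[D']$. Expanding $[D']$ in the standard generators of $\Pic(\Mbar{g}{n-1})\otimes\RR$ and using $\pi^*\psi_i=\psi_i-\delta_{0:\{i,n\}}$, $\pi^*\lambda=\lambda$, $\pi^*\delta_0=\delta_0$, and $\pi^*\delta_{i:S}=\delta_{i:S}+\delta_{i:S\cup\{n\}}$ (a sum of boundary classes already treated in the boundary case) expresses $[D]$ as an $\RR$-combination of the listed generators.

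The main obstacle is the step $B\subseteq D$. This is the classical observation that an irreducible curve with vanishing numerical intersection but nonempty set-theoretic intersection with an effective divisor must lie in the support of that divisor, since otherwise the intersection is proper and yields a positive-length effective $0$-cycle. The hypothesis $g\geq 2$ enters here to ensure that the general fibre of $\pi$ is an irreducible smooth genus $g$ curve, and that $\pi(D)$ has pure codimension one in $\Mbar{g}{n-1}$ so the equality $[D]=\pi^*[D']$ makes sense.
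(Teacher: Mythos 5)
Your argument is correct and follows the paper's proof essentially verbatim: the same fibre-containment step (an irreducible curve of class $[F]$ meeting $D$ with $[F]\cdot[D]=0$ must lie in $D$) forces any interior component to be a pullback under $\pi$, and the boundary components are disposed of by inspection. (Both your write-up and the paper's own proof inherit the statement's imprecision for $n\geq 3$, where the displayed right-hand span omits boundary classes such as $\delta_{0:\{1,2\}}$ that also have zero intersection with $[F]$; the argument really identifies the left-hand side with the span of $\lambda$, the pullback classes $\psi_i-\delta_{0:\{i,n\}}$, and \emph{all} boundary divisors other than the $\delta_{0:\{i,n\}}$, which is what is needed for the rank bound $\rho(\Mbar{g}{n})-n$ used later.)
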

  
 \begin{proof}
Consider an irreducible effective divisor $D$ in $\Mbar{g}{n}$ that intersects the interior $\M{g}{n}$ with $[F]\cdot [D]=0$. For any point $[C,p_1,\dots,p_n]\in \M{g}{n}$ in the support of $D$, consider the curve $B$ equal to the fibre of $[C,p_1,\dots,p_{n-1}]$ under the morphism $\pi:\Mbar{g}{n}\longrightarrow \Mbar{g}{n-1}$ that forgets the $n$th marked point. As $[B]=[F]$ we have $[B]\cdot[D]=0$ and necessarily, the irreducible curve $B$ is contained in the support of $D$. Hence the divisor $D$ is a pullback of an effective divisor under $\pi$. Further, if $D$ is an irreducible divisor in $\Mbar{g}{n}$ that does not intersects the interior $\M{g}{n}$ with $[F]\cdot [D]=0$ then $D$ is an irreducible boundary component other than $\delta_{0:\{i,n\}}$ for $i=1,\dots,n-1$. Hence the result holds.
\end{proof}

Hence $[F]^\vee\otimes\RR$ has rank $\geq \rho(\Mbar{g}{1})-n$ which for $n=1$ shows $[F]^\vee\otimes\RR$ has rank $\rho(\Mbar{g}{1})-1$. For $n\geq 2$ to extend this statement to the theorem we are left to consider the case that $[D]$ is a strictly pseudoeffective divisor
$$[D]\in \overline{\text{Eff}}(\Mbar{g}{n})\setminus {\text{Eff}}(\Mbar{g}{n})$$
with $[F]\cdot[D]=0$ and $[D]$ lies outside the space detailed above. We consider first the case $n=2$.

\begin{prop}\label{Prop:n2}
For $g\geq2$ and $n=2$
$$\text{rank}([F]^\vee\otimes\RR)=\rho(\Mbar{g}{n})-2.    $$
\end{prop}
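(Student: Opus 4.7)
The plan is to prove matching bounds. For the lower bound $\text{rank}([F]^\vee\otimes\RR)\ge\rho(\Mbar{g}{2})-2$, Proposition~\ref{Prop:eff} at $n=2$ identifies the $\RR$-span of effective classes in $[F]^\vee$ as
$$V:=\RR\langle\psi_1-\delta_{0:\{1,2\}},\lambda,\delta_0,\{\delta_{i:\emptyset}\}_{i=1}^{g-1},\{\delta_{i:\{1\}}\}_{i=1}^{g-1}\rangle,$$
which has dimension $2g+1=\rho(\Mbar{g}{2})-2$ by a direct count from the Picard group description in \S\ref{Pic} (invoking Mumford's relation when $g=2$). This gives the lower bound.

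For the matching upper bound I would argue by contradiction that no pseudoeffective class in $[F]^\vee$ lies outside $V$. Assume such a $[D]$ exists. The intersection numbers $[F]\cdot\psi_1=1$, $[F]\cdot\psi_2=2g-1$, $[F]\cdot\delta_{0:\{1,2\}}=1$ (zero on the other standard generators), together with $[F]\cdot[D]=0$, force $c_{\psi_1}(D)=-(2g-1)c_{\psi_2}(D)-c_{0:\{1,2\}}(D)$. Since $V$ contains $\psi_1-\delta_{0:\{1,2\}}$ but no multiple of $\psi_2$, the condition $[D]\notin V$ forces $c_{\psi_2}(D)\ne0$, and substituting into Proposition~\ref{prop:BDD} yields $[B_D]\cdot[D]=-8g(g-1)^2c_{\psi_2}(D)^2<0$.

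Approximating $[D]$ by effective classes $[D_k]\to[D]$, continuity makes the hypothesis of Proposition~\ref{Prop:BaseLociBD} hold for $[D_k]$ when $k$ is large, producing an irreducible rigid extremal effective divisor $E_k$ with $[B_{E_k}]\cdot[E_k]<0$ appearing as a component of $D_k$ with positive multiplicity $c_k$. The main technical step is to extract from $\{E_k\}$ an irreducible effective divisor $E$ satisfying both $[F]\cdot E=0$ and $[B_E]\cdot[E]<0$. Once found, Proposition~\ref{Prop:eff} forces $E\in V$; but every element of $V$ has $c_{\psi_2}=0$ and $c_{\psi_1}=-c_{0:\{1,2\}}$, which make the quadratic form of Proposition~\ref{prop:BDD} vanish, yielding $[B_E]\cdot[E]=0$ and contradicting $[B_E]\cdot[E]<0$.

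The hardest part of the proof is thus the extraction. The decomposition $[D_k]=[D_k-c_kE_k]+c_k[E_k]$ with both summands pseudoeffective, combined with $[F]\cdot D_k\to0$ and the nonnegativity of $[F]$ on each summand, forces $c_k[F]\cdot E_k\to 0$. Since $[F]\cdot E_k\in\ZZ_{\ge0}$, after passing to a subsequence one has either $[F]\cdot E_k=0$ for all large $k$, in which case compactness in the bounded quadratic-form constraint on the $E_k$ yields a fixed $E$ with the required properties, or $c_k\to0$. In the latter case, the construction can be iterated on the effective residual $[D_k-c_kE_k]\to[D]$, which retains $c_{\psi_2}\ne0$ and the strictly negative quadratic form in the limit, so Proposition~\ref{Prop:BaseLociBD} applies again to produce a further rigid extremal divisor; iterating, one ultimately isolates the required $E$.
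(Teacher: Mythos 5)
Your setup matches the paper's: the lower bound via Proposition~\ref{Prop:eff}, the reduction to $c_{\psi_2}\ne 0$ with $[B_D]\cdot[D]=-8g(g-1)^2c_{\psi_2}^2<0$, and the approximation of $[D]$ by effective classes to which Proposition~\ref{Prop:BaseLociBD} applies are all correct and are exactly how the paper begins. The gap is in your ``extraction'' step. You aim to produce a rigid extremal $E$ satisfying \emph{both} $[F]\cdot[E]=0$ and $[B_E]\cdot[E]<0$; such an $E$ cannot exist (that is precisely the contradiction you are after), so the entire burden falls on the production mechanism, and in the branch $c_k\to 0$ it does not close. First, $[D_k-c_kE_k]\to[D]$ is unjustified: you would need $c_k[E_k]\to 0$, but the $E_k$ range over a possibly infinite, unbounded family of rigid extremal divisors; salience of $\overline{\text{Eff}}^1(\Mbar{g}{2})$ only gives that $c_k[E_k]$ stays bounded, not that it tends to zero. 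Second, and more seriously, ``iterating, one ultimately isolates the required $E$'' has no termination argument: each pass of the construction can land again in the $c_k\to 0$ branch, producing ever new divisors with $[F]\cdot E_k\ge 1$, and nothing forces the process ever to reach the branch $[F]\cdot E_k=0$.

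The paper sidesteps this by never asking for $[F]\cdot[E]=0$. It fixes one nearby effective divisor $D_\varepsilon$ and takes $E$ to be an irreducible component of it with $[B_E]\cdot[D]<0$; continuity then gives $[B_E]\cdot[D_t]<0$ for all small $t$ with the \emph{same} $E$. Lemma~\ref{lemma:base} forces $E$ into $|D_t|$ with multiplicity at least $\frac{[B_E]\cdot[D_t]}{[B_E]\cdot[E]}$ --- a ratio of intersection numbers converging to the positive number $\frac{[B_E]\cdot[D]}{[B_E]\cdot[E]}$, unlike your a priori multiplicities $c_k$, which may degenerate --- so $[D]-\frac{[B_E]\cdot[D]}{[B_E]\cdot[E]}[E]$ is pseudoeffective. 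Your own observation that the quadratic form of Proposition~\ref{prop:BDD} vanishes on $V$ then shows $[F]\cdot[E]>0$, since $[B_E]\cdot[E]<0$ excludes the effective divisor $E$ from $V$ and hence, by Proposition~\ref{Prop:eff}, from $[F]^\vee$. Therefore $[F]\cdot\bigl([D]-\frac{[B_E]\cdot[D]}{[B_E]\cdot[E]}[E]\bigr)<0$, contradicting that $[F]$ is nef. Replacing your extraction step with this argument completes the proof.
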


\begin{proof}
Let $[D]$ be a pseudoeffective divisor with $[F]\cdot [D]=0$ outside of the space detailed in Proposition~\ref{Prop:eff}. The class of $[D]$ must be of the form
$$[D]=((1-2g)c_{\psi_2}-c_{0:\{1,2\}})\psi_1+c_{\psi_2}\psi_2+c_{0:\{1,2\}}\delta_{0:\{1,2\}}+c_{\lambda}\lambda+c_0\delta_0+\sum_{i=1}^{g-1}c_{i:\{1\}}\delta_{i:\{1\}}+\sum_{i=1}^{g-1}c_{i:\emptyset } \delta_{i:\emptyset }$$
with $c_{\psi_2}\ne 0$. But then
$$[B_D]\cdot [D]=-8c_{\psi_2}^2(g-1)^2g<0.$$
Now consider effective divisor classes limiting to the pseudoeffective class $[D]$, that is, an $\RR$-continuous function (defined via the Euclidean metric on the coefficients of divisors classes in some fixed basis) of effective classes $[D_t]$ for $t>0$ such that 
$$[D]=\lim_{t\to 0^+}[D_t].$$ 
Then as the intersection product is continuous, there exists some $\varepsilon>0$ such that $[B_{D_\varepsilon}]\cdot[D_t]<0$ for $0\leq t\leq \varepsilon$. But this implies a common irreducible rigid and extremal component, say $E$ such that $[B_E]\cdot[E]<0$ and $[B_E]\cdot[D_t]<0$ for all $0\leq t\leq \varepsilon$. Hence by Proposition~\ref{Prop:BaseLociBD} 
$$[D_t]-\frac{[B_E]\cdot[D_t]}{[B_E]\cdot[E]}[E]$$
is effective for $0< t\leq \varepsilon$ implying that the limit
$$[D]-\frac{[B_E]\cdot[D]}{[B_E]\cdot[E]}[E]$$
is pseudoeffective. However, $E$ is effective and $[B_E]\cdot [E]<0$, hence $[E]$ lies outside the space of effective divisors with zero intersection with $[F]$ specified in Proposition~\ref{Prop:eff} and hence $[F]\cdot[E]>0$. But $[F]\cdot [D]=0$ providing a contradiction to the existence of such a pseudoeffective class. Hence for $n=2$ the rank of $[F]^\vee\otimes\RR$ is $\rho(\Mbar{g}{2})-2$.
\end{proof}

This result can be extended to the case $n\geq3$ through the use of gluing morphisms, however, we will need one technical result on extremal pseudoeffective rays. Let $||\cdot||$ be the Euclidean metric on divisors in $\Pic(\Mbar{g}{n})\otimes\RR$ expressed in the standard basis.

\begin{lem}\label{Lem:ps}
Fix $g\geq2$ and $n\geq3$. If $[D]\ne k\delta_{0:\{1,\dots,n-1\}}$ for $k>0$ is extremal in $\overline{\text{Eff}}^1(\Mbar{g}{n})$, then for every $\varepsilon>0$ there exists an effective divisor $[G_\varepsilon]$ with an effective decomposition not supported on $\delta_{0:\{1,\dots,n-1\}}$ such that $||[D]-[G_\varepsilon]||<\varepsilon$.
\end{lem}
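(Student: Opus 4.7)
The strategy is to start from an effective approximating sequence of $[D]$, peel off its $\delta_{0:\{1,\dots,n-1\}}$-multiplicity, and use the salience of the pseudoeffective cone together with the extremality of $[D]$ to show that the remainder still converges to $[D]$.

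Writing $\delta := \delta_{0:\{1,\dots,n-1\}}$ for brevity, first fix a sequence of effective divisors $D_t$ with classes $[D_t]\to [D]$ as $t\to 0^+$, and decompose each as $D_t = a_t\,\delta + R_t$ with $a_t\ge 0$ and $R_t$ an effective divisor whose irreducible decomposition contains no copy of $\delta$. This yields the class-level identity $[D_t] = a_t[\delta] + [R_t]$ in which $[R_t]$ is pseudoeffective and has an effective decomposition not supported on $\delta_{0:\{1,\dots,n-1\}}$.

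The first task is to show $\{a_t\}$ is bounded. If $a_t\to\infty$ along a subsequence, then since $\{[D_t]\}$ is bounded we have $[R_t]/a_t = [D_t]/a_t - [\delta] \to -[\delta]$; but $[R_t]/a_t\in\overline{\text{Eff}}^1(\Mbar{g}{n})$ for every $t$, so the limit $-[\delta]$ would be pseudoeffective. This is impossible because $[\delta]$ is a nonzero effective class and the pseudoeffective cone of a projective variety is salient (seen, for instance, by pairing with an ample $1$-cycle, on which effective classes are strictly positive). Passing to a subsequence with $a_t\to a^* \ge 0$, one obtains $[R_t]\to [D] - a^*[\delta]$, which is pseudoeffective as a limit of effective classes. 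If $a^*>0$, the identity $[D] = a^*[\delta] + ([D] - a^*[\delta])$ is a nontrivial decomposition of $[D]$ in $\overline{\text{Eff}}^1(\Mbar{g}{n})$; by extremality $[\delta]$ must be a positive multiple of $[D]$, contradicting the hypothesis $[D]\ne k\delta_{0:\{1,\dots,n-1\}}$ for any $k>0$.

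Hence $a^* = 0$, so $[R_t]\to [D]$. Since $[D]\ne 0$ by extremality, $[R_t]\ne 0$ for all sufficiently small $t$, and for any such $t$ with $\|[R_t]-[D]\|<\varepsilon$ the class $[G_\varepsilon]:=[R_t]$ is the required approximation: its effective representative $R_t$ was constructed without any $\delta$-component, so its effective decomposition is not supported on $\delta_{0:\{1,\dots,n-1\}}$. The main obstacle is controlling the coefficients $a_t$ on both ends — ruling out $a_t\to\infty$ needs salience of $\overline{\text{Eff}}^1$, and reducing to $a^*=0$ is exactly where the extremality of $[D]$ is essential.
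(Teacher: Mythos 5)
Your proof is correct and follows essentially the same route as the paper: approximate $[D]$ by effective classes, strip the $\delta_{0:\{1,\dots,n-1\}}$-component from each, and use extremality of $[D]$ to force that component to vanish in the limit. The only difference is presentational — the paper phrases the key step contrapositively (for each $\epsilon>0$, all effective classes sufficiently near $[D]$ have $\delta_{0:\{1,\dots,n-1\}}$-coefficient below $\epsilon$, since otherwise $[D]-\epsilon\delta_{0:\{1,\dots,n-1\}}$ would be pseudoeffective), which sidesteps your extra boundedness-via-salience step, but both arguments are sound.
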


\begin{proof}
If $[D]$ is effective then let $[D]=[G_\varepsilon]$ for all $\varepsilon>0$. If $[D]$ is strictly psedoeffective then for every $\epsilon>0$ there exists a $\delta(\epsilon)>0$ such that for all $[H]\in\text{Eff}^1(\Mbar{g}{n})$ if
$$||[D]-[H]||<\delta(\epsilon)$$
then $[H]-\epsilon\delta_{0:\{1,\dots,n-1\}}$ is not effective. If not, then $[D]-\alpha\delta_{0:\{1,\dots,n-1\}}$ is pseudoeffective for some $\alpha>0$ contradicting the assumption that $[D]$ is extremal.

Now for any $\varepsilon>0$ set $\epsilon=\frac{\varepsilon}{2}$ and consider the non-empty set of divisors divisors $[H]\in\text{Eff}^1(\Mbar{g}{n})$ such that 
$$||[D]-[H]||<\min(\delta(\epsilon),\epsilon).$$
For any such $[H]$ there exists an effective decomposition into irreducible effective divisors with coefficient of $\delta_{0:\{1,\dots,n-1\}}$ less that $\epsilon$. Removing this term gives an effective divisor $[\overline{H}]$ not supported on $\delta_{0:\{1,\dots,n-1\}}$ and by the triangle inequality
$$||[D]-[\overline{H}]||<\epsilon+\epsilon=\varepsilon.$$
Setting $[G_\varepsilon]=[\overline{H}]$ concludes the proof.
\end{proof}

We proceed with this result to prove the remaining cases of Theorem~\ref{Thm:dual}.

\begin{prop}\label{Prop:n3}
For $g\geq2$ and $n\geq3$
$$\text{rank}([F]^\vee\otimes\RR)\leq \rho(\Mbar{g}{n})-2.    $$
\end{prop}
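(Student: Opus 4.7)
The plan is to combine Proposition~\ref{Prop:eff} with one additional linear constraint satisfied by every extremal pseudoeffective class in $[F]^\vee$, thereby showing that the linear hull of $[F]^\vee$ has codimension at least $2$ in $\Pic(\Mbar{g}{n})\otimes\RR$. Since the face $[F]^\vee$ of $\overline{\text{Eff}}^1(\Mbar{g}{n})$ is spanned by its extremal rays, it suffices to verify this new constraint on each extremal ray $[D]$ of $\overline{\text{Eff}}^1(\Mbar{g}{n})$ with $[F]\cdot[D]=0$. The additional constraint will come from pulling back $[D]$ to $\Mbar{g}{2}$ via a suitable gluing morphism and invoking the case $n=2$ already established in Proposition~\ref{Prop:n2}.

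First I would exploit the reducibility of a special fibre. For $n\geq 3$, over a general point $[C\cup R, p_1,\ldots,p_{n-1}]$ of the boundary $\delta_{0:\{1,\ldots,n-1\}}\subset\Mbar{g}{n-1}$, the fibre of the forgetful morphism breaks into a component $F_1$ where the $n$th marking moves on $C$ and a component $F_2$ where it moves on $R$, giving $[F]=[F_1]+[F_2]$, with $F_1$ a covering curve for $\delta_{0:\{1,\ldots,n-1\}}$ and $F_2$ a covering curve for $\delta_{0:\{1,\ldots,n\}}$. Consequently, for any extremal ray $[D]$ with $[F]\cdot[D]=0$ that is not proportional to either of these two boundary classes, the non-negativity of each $[F_i]\cdot[D]$ combined with the vanishing of their sum forces $[F_1]\cdot[D]=[F_2]\cdot[D]=0$. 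Rays proportional to the two excluded boundary divisors already lie in the effective subspace of Proposition~\ref{Prop:eff} and need no further treatment.

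Next I would introduce the gluing morphism $\pi:\Mbar{g}{2}\to\Mbar{g}{n}$ attaching a fixed general $(n-1)$-pointed rational curve at the first marking; its image lies in $\delta_{0:\{1,\ldots,n-1\}}$ and one checks that $\pi_*[F']=[F_1]$, where $[F']$ is the fibre class of $\Mbar{g}{2}\to\Mbar{g}{1}$. Because $[D]$ is extremal and not proportional to $\delta_{0:\{1,\ldots,n-1\}}$, Lemma~\ref{Lem:ps} provides approximations of $[D]$ by effective classes with no component along that boundary; these pull back to effective divisors on $\Mbar{g}{2}$, so $\pi^*[D]$ is pseudoeffective. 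The projection formula then gives $[F']\cdot\pi^*[D]=[F_1]\cdot[D]=0$, so $\pi^*[D]\in[F']^\vee\otimes\RR$ on $\Mbar{g}{2}$, and Proposition~\ref{Prop:n2} forces $\pi^*[D]$ into the effective span of Proposition~\ref{Prop:eff} for $n=2$; in particular the coefficient of $\psi_2$ in $\pi^*[D]$ vanishes.

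To finish, I would translate this vanishing back to a condition on $[D]$. Since the $n$th marking of the image of $\pi$ is the only one moving on the genus $g$ component, $\pi^*\psi_n=\psi_2$ on $\Mbar{g}{2}$ while $\pi^*\psi_i=0$ for $i<n$, so the vanishing of the $\psi_2$-coefficient of $\pi^*[D]$ descends to a nontrivial linear condition on the coefficients of $[D]$. The main obstacle I anticipate is verifying that this new linear condition is genuinely independent of $[F]\cdot[D]=0$: this requires computing the pullbacks under $\pi$ of the boundary classes contributing to both functionals (notably $\delta_{0:\{i,n\}}$ for $i<n$ and $\delta_{0:\{1,\ldots,n-1\}}$) carefully enough to confirm that the two linear functionals on $\Pic(\Mbar{g}{n})\otimes\RR$ are not proportional. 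Once this independence is in hand, the rank bound $\text{rank}([F]^\vee\otimes\RR)\leq\rho(\Mbar{g}{n})-2$ follows immediately.
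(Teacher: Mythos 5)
Your proposal follows essentially the same route as the paper's proof: the decomposition $[F]=[B_1]+[B_2]$ over $\delta_{0:\{1,\dots,n-1\}}$, the covering-curve/extremality argument forcing $[B_1]\cdot[D]=[B_2]\cdot[D]=0$, the gluing map $\alpha:\Mbar{g}{2}\to\Mbar{g}{n}$ with Lemma~\ref{Lem:ps} giving pseudoeffectivity of $\alpha^*[D]$, the projection formula, and the reduction to Proposition~\ref{Prop:n2} yielding $c_{\psi_n}=0$. The independence check you flag is immediate from the pullback formulas ($\alpha^*\psi_n=\psi_2$ is the only source of $\psi_2$), and the second condition $\sum_{i=1}^{n-1}(c_{\psi_i}+c_{0:\{i,n\}})=0$ involves disjoint coefficients, exactly as in the paper.
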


\begin{proof}
For a fixed general $[\PP^1,q,p_1,\dots,p_{n-1}]\in \mathcal{M}_{0,n}$ consider the map 
\begin{eqnarray*}
\begin{array}{cccc}
\alpha:&\overline{\mathcal{M}}_{g,2}&\rightarrow& \overline{\mathcal{M}}_{g,n}\\
&[C,q_1,q_2]&\mapsto&[C\bigcup_{q_1=q}\PP^1,p_1,p_2,\dots,p_{n-1},q_2].
\end{array}
\end{eqnarray*} 
that glues points $q_1$ and $q$ to form a node. The pullback of the generators of the Picard group are~\cite{AC}
\begin{equation*}
\alpha^*\lambda=\lambda, \hspace{0.3cm}\alpha^*\delta_0=\delta_0,\hspace{0.3cm}\alpha^*\delta_{0:\{1,\dots,n-1\}}=-\psi_1,\hspace{0.3cm}\alpha^*\delta_{0:\{1,\dots,n\}}=\delta_{0:\{1,2\}},\hspace{0.3cm}  \alpha^*\psi_i=\begin{cases}
0 &\text{ for $i=1,\dots,n-1$}\\
\psi_2 &\text{ for $i=n$}
\end{cases}
\end{equation*}
and for $1\leq i\leq g-1$
\begin{equation*}
\alpha^*\delta_{i:\{n\}}=\delta_{i:\{n\}}, \hspace{0.3cm}\alpha^*\delta_{i:\emptyset}=\delta_{i:\emptyset}
\end{equation*}
and the pullback of all other boundary divisors is zero.

For $n\geq 3$ let $[D]$ be the class of an extremal strictly pseudoeffective divisor such that $[F]\cdot[D]=0$. 
Observe that the pullback under $\alpha$ of any effective divisor not supported on $\delta_{0:\{1,\dots,n-1\}}$ is effective and hence by Lemma~\ref{Lem:ps}, $\alpha^*[D]$ is pseudoeffective.

Though the fibre of the morphism $\pi: \Mbar{g}{n}\longrightarrow\Mbar{g}{n-1}$ over a general point is irreducible, for $n\geq3$ the fibre over a general point in $\delta_{0:\{1,\dots,n-1\}}$ has two reducible components and hence
$$[F]=[B_1]+[B_2]$$
where
$$[B_1]\cdot \psi_n=2g-1,\hspace{0.5cm}[B_1]\cdot\delta_{0:\{1,\cdots,n-1\}}=-1,\hspace{0.5cm}[B_1]\cdot\delta_{0:\{1,\cdots,n\}}=1$$
and
$$[B_2]\cdot \psi_n=n-2,\hspace{0.8cm}[B_2]\cdot \psi_i=[B_2]\cdot\delta_{0:\{i,n\}}=1 \text{ for $i=1,\dots,n-1$},$$
$$[B_2]\cdot\delta_{0:\{1,\cdots,n-1\}}=1,\hspace{0.8cm}[B_2]\cdot\delta_{0:\{1,\cdots,n\}}=-1$$
with all other intersections equal to zero. Further, $[B_1]$ forms a covering curve for $\delta_{0:\{1,\cdots,n-1\}}$ as clearly by varying the fibre, irreducible curves with class equal to $[B_1]$ cover an open Zariski dense subset of $\delta_{0:\{1,\cdots,n-1\}}$. Similarly, $[B_2]$ forms a covering curve for $\delta_{0:\{1,\cdots,n\}}$. 

It follows that if $[B_1]\cdot[D]<0$ then $\delta_{0:\{1,\dots,n-1\}}$ will form a rigid component of all effective divisors near $[D]$ and hence by the argument used above in the $n=2$ case
$$[D]+([B_1]\cdot[D])\delta_{0:\{1,\dots,n-1\}}$$
is a pseudoeffective divisor contradicting the assumption that $[D]$ is extremal. Similarly if $[B_2]\cdot[D]<0$ then
$$[D]+([B_2]\cdot[D])\delta_{0:\{1,\dots,n\}}$$
 is pseudoeffective, again, giving a contradiction. Hence as $[F]=[B_1]+[B_2]$ and $[F]\cdot[D]=0$ we have
 $$[B_1]\cdot[D]=[B_2]\cdot[D]=0.$$
 Now observe that for $[F]\in \overline{\text{Nef}}_1(\Mbar{g}{2})$ we have $\alpha_*[F]=[B_1]$ and hence
 $$[F]\cdot\alpha^*[D]=\alpha_*[F]\cdot [D]=[B_1]\cdot [D]=0. $$
 Hence in $\Pic_\QQ(\Mbar{g}{2})$ we have $\alpha^*[D]\in [F]^\vee$ which pulling back the class of $[D]$ implies $c_{\psi_n}=0$. But this gives two linearly independent equations on $[F]^\vee\otimes \RR$ in $\Pic(\Mbar{g}{n})$,
 \begin{eqnarray*}
 c_{\psi_n}=0&&\\
 \sum_{i=1}^{n-1}(c_{\psi_i}+c_{0:\{i,n\}})=0.&&
 \end{eqnarray*}
Hence the corank is at least $2$.
 \end{proof}

\section{Divisors from Hurwitz spaces}
In this section we define two families of effective divisors in $\Mbar{g}{2}$ via Hurwitz spaces and compute their class. These families of divisors will be used asymptotically to show that $[F]$ cannot have a nef decomposition and is hence extremal. The first divisor is the closure of the locus $D_k$ for fixed $k\geq 2$ of $[C,p_1,p_2]$ in $\M{g}{2}$ such that $C$ admits a degree $k(g-1)+1$ cover of a rational curve with one fibre containing only $p_1$, that is $p_1$ is a ramification point of the cover with order $k(g-1)$ and a second fibre containing an unramified point $p_2$ and $g-1$ other points that are all ramified with order $k-1$. Hence
\begin{equation*}
D_k:={\left\{ [C,p_1,p_2]\in\M{g}{2} \big| \exists [C,p_1,p_2,q_1,\dots,q_{g-1}]\in\M{g}{g+1}, \OO_C((k(g-1)+1)p_1-p_2-k\sum_{i=1}^{g-1}q_i)\sim\OO_C   \right\}}.
\end{equation*}
The second family of divisors is the closure of the locus $E_k$ for fixed $k\geq 2$ of $[C,p_1,p_2]$ in $\M{g}{2}$ such that $C$ admits a degree $k(g-1)$ cover of a rational curve with one fibre consisting of an unramified point $p_2$, a point $p_1$ ramified with order $k(g-1)-2$ and another fibre consisting of $g-1$ points each ramified with order $k-1$. Hence
\begin{equation*}
E_k:={\left\{ [C,p_1,p_2]\in\M{g}{2} \big| \exists [C,p_1,p_2,q_1,\dots,q_{g-1}]\in\M{g}{g+1}, \OO_C((k(g-1)-1)p_1+p_2-k\sum_{i=1}^{g-1}q_i)\sim\OO_C   \right\}}.
\end{equation*}
Taking the closure we obtain two families of divisors $\overline{D}_k$ and $\overline{E}_k$ for $k\geq 2$ in $\Mbar{g}{2}$. To compute the class of these divisors we define a number of test curves in $\Mbar{g}{2}$. Let $C$ be a general smooth genus $g$ curve and
$$i:C\times C\longrightarrow \Mbar{g}{2}$$
be the natural morphism. Again, let $\Delta\subset C\times C$ be the diagonal and $f_j$ for $j=1,2$ be the numerical class of the fibre of the projection of $C\times C$ onto the $j$th component. We define
$$F_j:=i_*f_j\hspace{0.7cm}B_\Delta=i_*\Delta$$
and obtain the following theorem.
\begin{thm}\label{Int}
The following intersection numbers hold
\begin{center}
\begin{tabular}{|p{0.5cm}|p{1cm}|p{1cm}|p{1cm}|p{5cm}|p{5cm}|  }
 \hline
  &$\psi_1$&$\psi_2$ &$\delta_{0:\{1,2\}}$ &$[\overline{D}_k]$&$[\overline{E}_k]$\\
 \hline
 $F_1$&$1$& $2g-1$ &$1$&$(k^{2 (g - 1)}-1) g $  &$(k^{2 (g - 1)}-1) g $ \\
$F_2$&$2g-1$ &$1$ &$1$&$(k (g - 1) + 1)^2 k^{2 (g - 1)} g - g$&$(k (g - 1) - 1)^2 k^{2 (g - 1)} g - g$  \\
$B_\Delta$&$0$&  $0$ &$2-2g$&$(k^{2 g} - 1) (g - 1)^{2}g + g (g^2 - 1)$& $(k^{2 g} - 1) (g - 1)^{2}g + g (g^2 - 1)$\\
\hline
\end{tabular}
\end{center}
and $F_1,F_2,B_\Delta$ have zero intersection with the other standard generators of $\Pic(\Mbar{g}{n})$.
\end{thm}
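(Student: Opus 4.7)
The proof splits into the standard columns $\psi_1,\psi_2,\delta_{0:\{1,2\}}$ and the two Hurwitz columns. For the first three I would apply the pullback formulas of Proposition~\ref{prop:BDD}, namely $i^*\psi_j=(2g-2)f_j+\Delta$ and $i^*\delta_{0:\{1,2\}}=\Delta$, and observe that $\lambda,\delta_0,\delta_{i:\emptyset},\delta_{i:\{1\}}$ all pull back to zero because $i:C\times C\to\Mbar{g}{2}$ realises a constant family of the fixed smooth curve $C$ and meets no boundary component other than $\delta_{0:\{1,2\}}$ along $\Delta$. The table entries then follow from $f_1^2=f_2^2=0$, $f_1\cdot f_2=1$, $f_j\cdot\Delta=1$ and $\Delta^2=2-2g$.

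For the Hurwitz columns, my plan is to translate the defining linear equivalences into the Jacobian $J=\Pic^0(C)$ and apply Poincar\'e's formula together with the projection-formula identity $[k]_*\Theta=k^{2g-2}\Theta$, which follows from $[k]^*\Theta=k^2\Theta$ and $\deg[k]=k^{2g}$. Writing $a=k(g-1)+1$ for $D_k$ and $b=k(g-1)-1$ for $E_k$, the conditions become $\OO(ap_1-p_2)\in[k](W_{g-1})$ and $\OO(bp_1+p_2)\in[k](W_{g-1})$ respectively; translating to $J$ gives raw cycle-theoretic counts of $k^{2g-2}g$ along $F_1$ and $a^2k^{2g-2}g$ (respectively $b^2k^{2g-2}g$) along $F_2$, using $u(C)\cdot\Theta=g$ and the pushforward identity $[n]_*u(C)=n^2\,u(C)$ on 1-cycles.

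The next step is to subtract the excess contribution at the degenerate point $p_1=p_2$. There the parametrising curve passes through $0\in J$, which also lies on $[k](W_{g-1})$ via the trivial solution $\sum q_i=(g-1)p_1$; a local analysis at the origin, using Riemann's description of the tangent hyperplane to the theta divisor at a smooth point together with the fact that $[k]$ acts by multiplication by $k$ on $T_0J$, yields intersection multiplicity exactly $g$ at this point. Subtracting it produces the tabulated $(k^{2g-2}-1)g$ for $F_1$ and $a^2k^{2g-2}g-g$, $b^2k^{2g-2}g-g$ for $F_2$ (equivalently $(k(g-1)\pm1)^2k^{2g-2}g-g$).

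The $B_\Delta$ row is the substantive challenge, since $B_\Delta\subset\delta_{0:\{1,2\}}$ lies in the boundary and the interior Jacobian picture fails pointwise. Along $B_\Delta$ one sets $p_1=p_2=p$ and the condition reduces to $(g-1)p-D\in J[k]$: for each nonzero $\tau\in J[k]$, the count of $p\in C$ with $(g-1)p-\tau\in W_{g-1}$ equals $[g-1]_*u(C)\cdot\Theta=(g-1)^2g$, and summing over the $k^{2g}-1$ nontrivial torsion classes yields $(k^{2g}-1)(g-1)^2g$. The $\tau=0$ case produces the trivial one-parameter family $D=(g-1)p$, whose honest contribution must be extracted via the admissible-covers compactification of the relevant Hurwitz space over $\Mbar{g}{g+1}$: a general point of $B_\Delta$ is the stable curve $C\cup_p\PP^1$ with both markings on the bubble, and one must enumerate the admissible limits in which the bubble carries a degree-$a$ (resp.~degree-$b$) cover matching the ramification profile on $C$. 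Summing these local contributions via Riemann--Hurwitz bookkeeping on the bubble should produce the remaining term $g(g^2-1)$, yielding the claimed $(k^{2g}-1)(g-1)^2g+g(g^2-1)$. This admissible-cover analysis on $\delta_{0:\{1,2\}}$ is where the substantial work lies, and I expect it to be the main obstacle of the proof.
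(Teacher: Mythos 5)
Your treatment of the first three columns and of the $F_1$, $F_2$ rows matches the paper's argument in substance: the paper likewise works in $\Pic^{k(g-1)+1}(C)$, uses Poincar\'e's formula $\deg\Theta^g=g!$ together with the fact that $\{\OO_C(k(x-e))\}_{x\in C}$ has class $k^2\Theta$, and excises the degenerate solution $p_1=p_2=q_1=\dots=q_{g-1}$ with multiplicity $g$ by computing the rank of the differential of the relevant Abel--Jacobi-type map; your $[k]_*\Theta=k^{2g-2}\Theta$ and $[n]_*u(C)=n^2u(C)$ bookkeeping is an equivalent packaging of the same count.

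The genuine gap is in the $B_\Delta$ row, exactly where you flag the main obstacle: you do not prove the $g(g^2-1)$ term, only assert that admissible-covers bookkeeping ``should'' produce it, and you do not justify that the naive substitution $p_1=p_2=p$ computes the other contributions with the correct multiplicities. Since $B_\Delta\subset\delta_{0:\{1,2\}}$ while $\overline{D}_k$ and $\overline{E}_k$ are closures of loci in $\M{g}{2}$, the entire row requires a description of the boundary of the compactified Hurwitz space over $B_\Delta$. The paper's proof rests on a case analysis you have not carried out: for $C$ general, a dimension count forces exactly two branch points to collide, and of the four possible collisions only two produce limits over $B_\Delta$ --- (i) the branch point $b_1$ under $p_1$ colliding with the branch point $b_2$ under the $q_i$, which by Riemann--Hurwitz forces the ramification over the node on the genus-$g$ side to concentrate at a single point $p$ with $\dim H^0(\OO_C(gp))\geq 2$, i.e.\ $p$ one of the $g(g^2-1)$ Weierstrass points of $C$; and (ii) $b_1$ colliding with a simple branch point, which forces $p_2$ onto an unramified rational tail and reduces to the primitive solutions of $\OO_C((g-1)p-\sum_{i=1}^{g-1}q_i)\sim\eta_C$ with $\eta_C^{\otimes k}\sim\OO_C$, $\eta_C\nsim\OO_C$, recovering your $\tau\neq 0$ count $(k^{2g}-1)(g-1)^2g$. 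So your $\tau\neq0$ computation is salvageable, but the classification of colliding branch points --- which both rules out the other degenerations and identifies case (i) as the Weierstrass-point locus contributing $g(g^2-1)$ --- is the actual content of this row, and it is missing from your argument.
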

\begin{proof}
The first three columns are a well trodden exercise in intersection theory (See~\cite{HarrisMorrison}). We compute the final two columns via the Jacobian of a general genus $g$ curve. Consider the intersection $[F_1]\cdot [\overline{D}_k]$. Define
\begin{eqnarray*}
\begin{array}{cccccc}
f:C^g&\longrightarrow &\Pic^{k(g-1)+1}(C)\\
(p_2,q_1,\dots,q_{g-1})&\longmapsto&\OO_C(p_2+k\sum_{i=1}^{g-1}q_i).
\end{array}
\end{eqnarray*}
Investigating the fibre of this map above $[\OO_C((k(g-1)+1)p_1)]\in\Pic^d(C)$ for a general point $p_1$ will provide us the solutions of interest. The domain and range of $f$ are both of dimension $g$, we first compute the degree of the map $f$. Take a general point $e\in C$ and consider the isomorphism
\begin{eqnarray*}
\begin{array}{cccccc}
h:\Pic^{k(g-1)+1}(C)&\longrightarrow &J(C)\\
L&\longmapsto&L\otimes\OO_C(-({k(g-1)+1})e).
\end{array}
\end{eqnarray*}
Now let $F=h\circ f$. Then we have $\deg F=\deg f$. We observe
\begin{equation*}
F(p_2,q_1,\dots,q_{g-1})=\OO_C\biggl((p_2-e)+\sum_{i=1}^{g-1}k(q_i-e)\biggr).
\end{equation*}
Let $\Theta$ be the fundamental class of the theta divisor in $J(C)$. By \cite{ACGH} \S1.5 we have the locus of $\OO_C(k(x-e))$ for varying $x\in C$ has class $k^2\Theta$ in $J(C)$ and
\begin{equation*}
\deg \Theta^g=g!
\end{equation*}
Hence
\begin{eqnarray*}
\deg F&=&\deg F_*F^*([\OO_C])\\
&=&\deg \left(\Theta\prod_{i=1}^{g-1} k^2\Theta\right)\\
&=&g!k^{2(g-1)}
\end{eqnarray*}
Finally, there is one solution to omit as the solution $p_1=p_2=q_1=\dots =q_{g-1}$ will be counted in this value, but not contribute to $[F_2]\cdot[\overline{D}_k]$. To obtain the multiplicity of this solution, consider $F$ locally analytically around a point. If $f_0d\omega,...,f_{g-1}d\omega$ is a basis for $H^0(C,K_C)$, then locally analytically the map becomes
\begin{eqnarray*}
(p_2,q_1\dots,q_{g-1})&\longmapsto&\biggl(\int_e^{p_2}f_0d\omega+\sum_{i=1}^{g-1}k\int_e^{q_i}f_0d\omega,\dots,\int_e^{p_2}f_{g-1}d\omega+\sum_{i=1}^{g-1}k\int_e^{q_i}f_{g-1}d\omega\biggr)
\end{eqnarray*}
modulo $H_1(C,K_C)$. The map on tangent spaces at any fixed point $(p_2,q_1,\dots,q_{g-1})\in C^g$ is 
\begin{equation*}
DF_{(p_2,q_1,\dots,q_{g-1})}=\text{diag}(1,k,\dots,k)\begin{pmatrix}f_0(p_2)&f_0(q_1)&\dots&f_0(q_{g-1})\\
f_1(p_2)&f_1(q_1)&\dots&f_1(q_{g-1})\\
\dots&\dots&\dots&\dots&\\
f_{g-1}(p_2)&f_{g-1}(q_1)&\dots&f_{g-1}(q_{g-1})   \end{pmatrix}.
\end{equation*}
\begin{figure}[htbp]
\begin{center}
\begin{overpic}[width=0.8\textwidth]{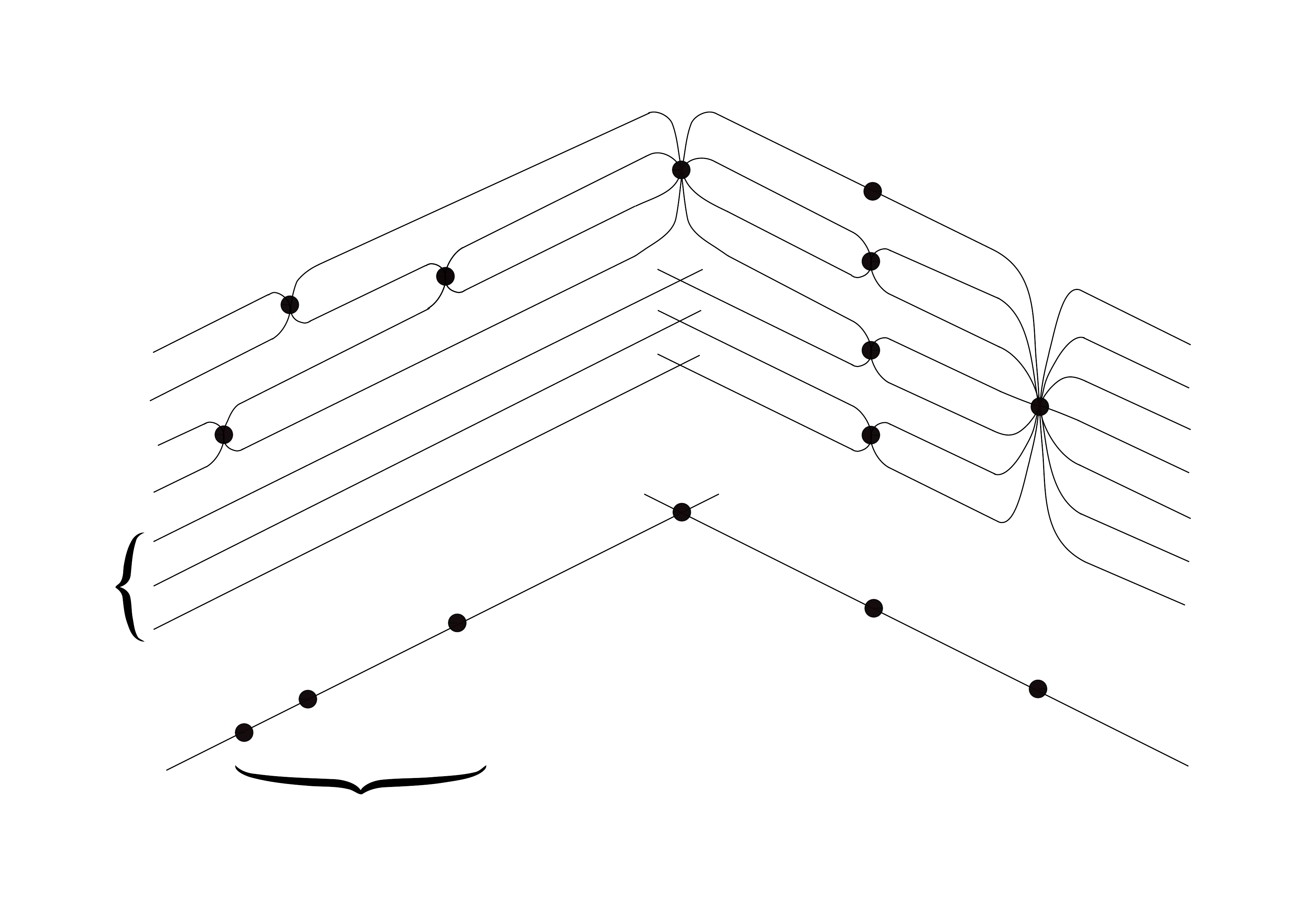}
\small{\put(15,6){$3g-1$ simple branch points}}
\put(-10 ,27){$k(g-1)+1-g$}
\small{\put(-7,24){ unramified}
\put(-8,21){ rational tails}}
\put(66,20){$b_2$}
\put(78,14){$b_1$}
\put(81,39){$p_1$}
\put(66,57){$p_2$}
\put(53,55.7){$p$}
\put(67.5,48.5){$q_1$}
\put(67.5,41.5){$q_2$}
\put(67.5,35){$q_3$}

 \end{overpic}
 \caption{$\overline{D}_k\cdot B_\Delta$ Case $1$: $b_1$ and $b_2$ colliding ($k=2$ and $g=4$). }
  \label{DkB1}

\end{center}
\end{figure}
Ramification in the map $F$ occurs when the map on tangent spaces is not injective which takes place at the points where $\rk(DF)<g$. When $p_1=p_2=q_1=\dots =q_{g-1}$, the map hence has ramification of order $g-1$ and the solution must be excluded with multiplicity $g$.
Allowing for the ordering of the $q_i$ we obtain
$$[F_2]\cdot [\overline{D}_k]=(k^{2(g-1)}-1)g.$$
Similarly, we obtain the entries $[F_1]\cdot [\overline{E}_k]$, $[F_2]\cdot [\overline{D}_k]$ and $[F_2]\cdot [\overline{E}_k]$.

To compute the intersections with $[B_\Delta]$ we require the input of the theory of admissible covers that compactifies spaces of branched covers. We require all admissible covers of the specified ramification profile such that after forgetting all but the source curve and $p_1$ and $p_2$, and performing stable reduction, we are left with the fixed general genus $g$ curve $C$ connected to a rational tail containing $p_1$ and $p_2$. The requirement that $C$ is a general curve restricts us, via a dimension count, to the case that precisely two branch points are colliding while the rest remain distinct. Let $b_1$ be the branch point that is the image of $p_1$ under the cover and let $b_2$ be the branch point that is the image of the $q_i$. Of the four possibilities, the only configurations of branch points colliding that will intersect $B_\Delta$ after stable reduction are when $b_1$ and $b_2$ collide, or when $b_1$ collides with a a simple branch point. 

\begin{figure}[htbp]
\begin{center}
\begin{overpic}[width=0.8\textwidth]{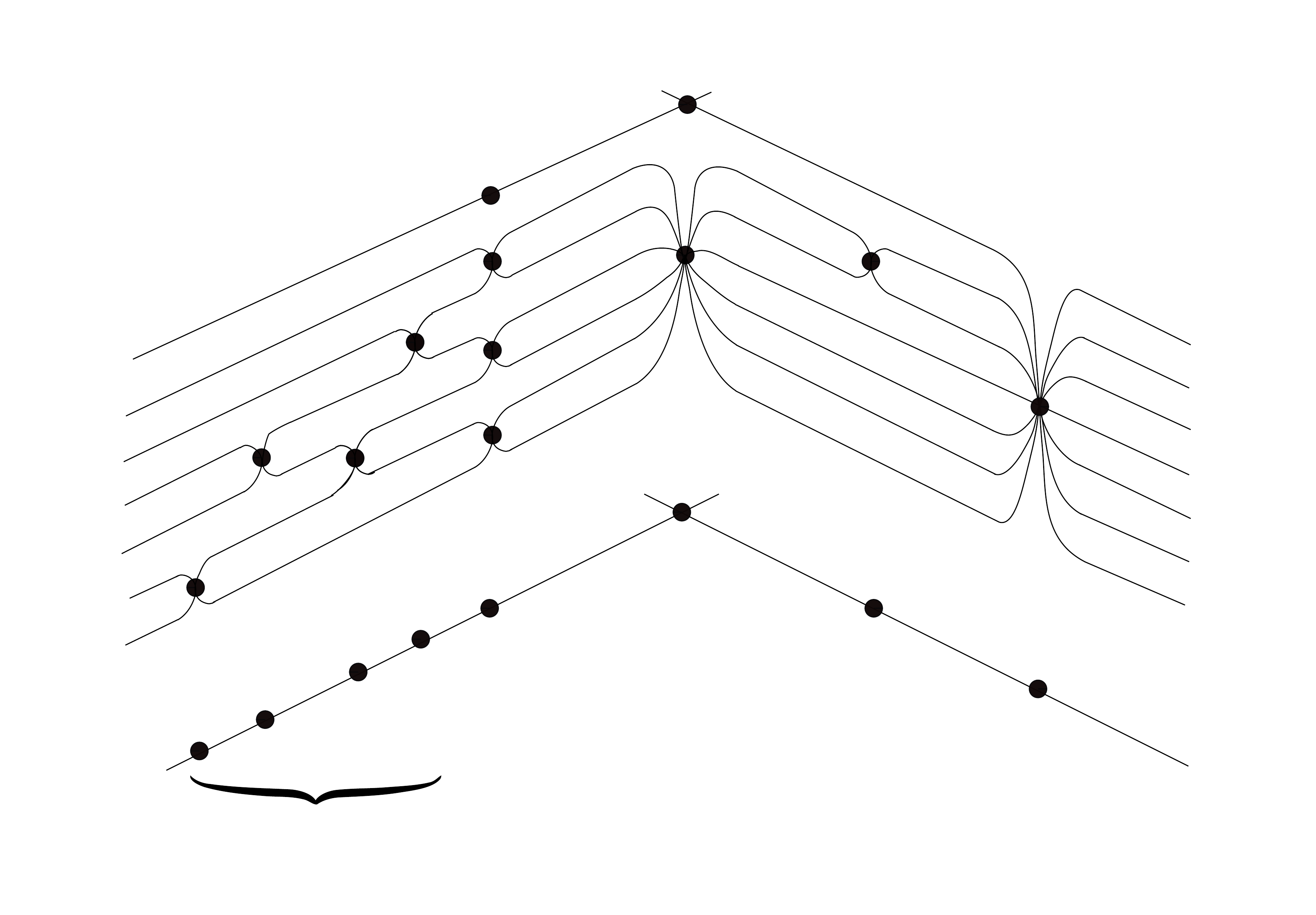}
\small{\put(12,6){$3g-2$ simple branch points}}

\small{\put(8,55){ unramified}
\put(7.25,52){ rational tail }
\put(8,49){containing $p_1$}}
\put(37,20){$b_2$}
\put(78,14){$b_1$}
\put(81,39){$p_1$}
\put(37,57){$p_2$}
\put(53.3,51.2){$p$}
\put(38.5,50){$q_1$}
\put(38.5,43){$q_2$}
\put(38.5,36.5){$q_3$}

\end{overpic}
 \caption{$\overline{D}_k\cdot B_\Delta$ Case $2$: $b_1$ colliding with a simple branch point ($k=2$ and $g=4$). }
  \label{DkB2}
\end{center}
\end{figure}

Consider first $[\overline{D}_k]\cdot [B_\Delta]$. The first case, when $b_1$ and $b_2$ come together is shown in Figure~\ref{DkB1}.  Riemann-Hurwitz shows that if the source curve of the component curve containing $p_1$ and $p_2$ has genus zero, then the ramification above the node must contribute
$$2(0)-2-(k(g-1)+1)(2(0)-2)-(k-1)(g-1)-k(g-1)=g-1.$$
In order for this to occur on a general curve $C$ we require that this ramification is concentrated at one point we label $p$ and the remaining $k(g-1)+1-g$ sheets of the cover are unramified rational tails as shown in Figure~\ref{DkB1}. 
But the requirement that $\dim H^0(gp)=2$ is equivalent to the requirement that $p$ is a Weierstrass point. Hence solutions of this type contribute $g(g^2-1)$ to the intersection $[\overline{D}_k]\cdot [B_\Delta]$. The same argument shows solutions of this type contribute $g(g^2-1)$ to the intersection $[\overline{E}_k]\cdot [B_\Delta]$. Figure~\ref{EkB1} shows intersections of this type.

\begin{figure}[htbp]
\begin{center}
\begin{overpic}[width=0.8\textwidth]{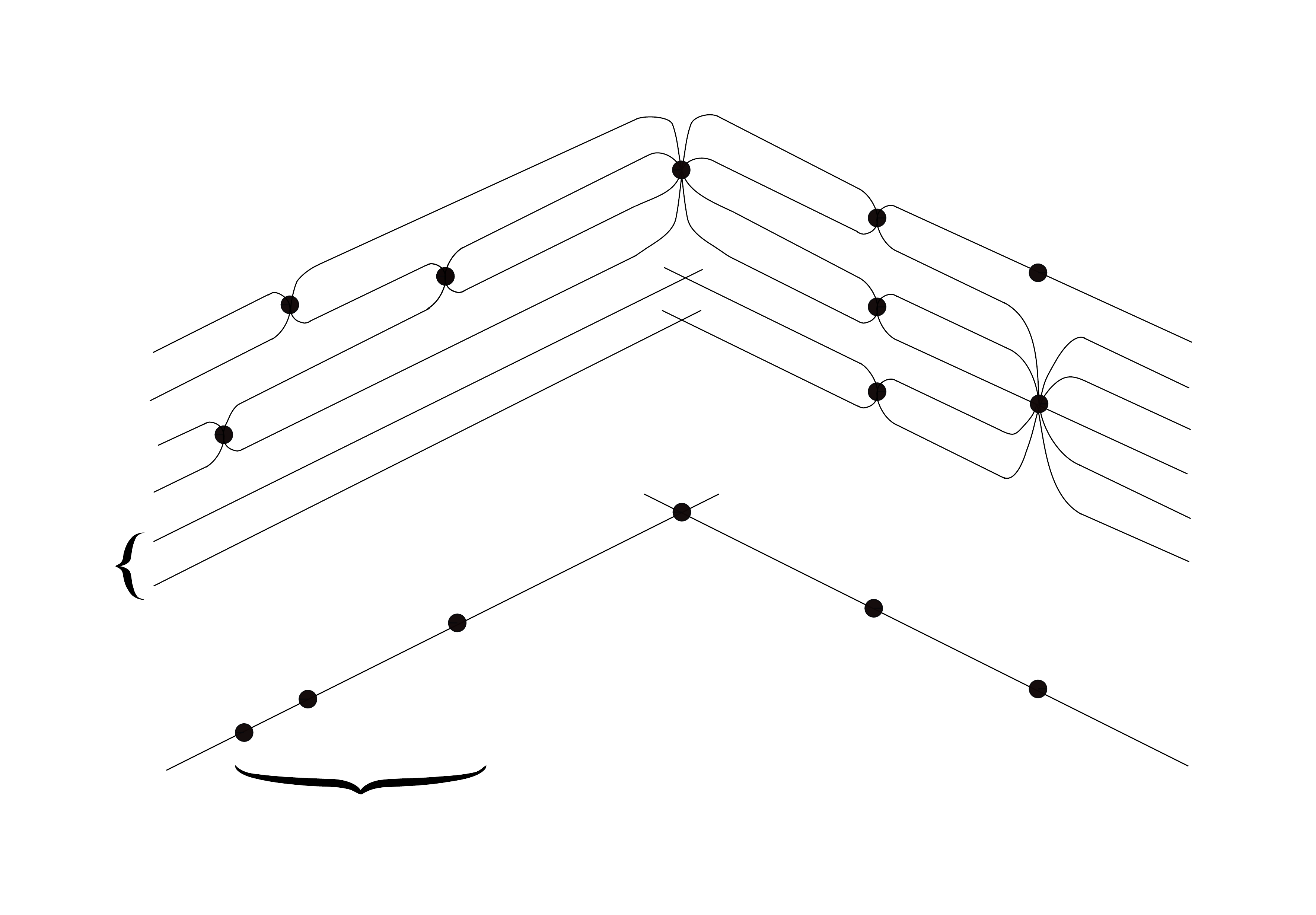}
\small{\put(15,7){$3g-1$ simple branch points}
\put(-5 ,27){$k(g-1)-g$}
\put(-5,24){ unramified}
\put(-6,21){ rational tails}}
\put(66,20){$b_2$}
\put(78,14){$b_1$}
\put(81,39){$p_1$}
\put(78,51){$p_2$}
\put(53,55.7){$p$}
\put(68,51.5){$q_1$}
\put(68,44.5){$q_2$}
\put(68,38){$q_3$}

\end{overpic}
 \caption{$\overline{E}_k\cdot B_\Delta$ Case $1$: $b_1$ and $b_2$ colliding ($k=2$ and $g=4$). }
  \label{EkB1}
\end{center}
\end{figure}

Consider now the second case contributing to the intersection $[\overline{D}_k]\cdot [B_\Delta]$ where $b_1$ comes together with a simple branch point. This case is shown in Figure~\ref{DkB2}. In order for the stable reduction to yield a rational tail containing $p_1$ and $p_2$ we require that $p_2$ sits on an unramified rational tail. Further, the requirement that $C$ is a general curve and a Riemann-Hurwitz computation yields that the remaining $k(g-1)$ sheets of the cover must come together at a single point $p$ above the node. Hence to enumerate such solutions we must find the points $p$ and $q_i$ on a general curve $C$ that provide primitive solutions to
$$\OO_C(k(g-1)p-k\sum_{i=1}^{g-1}q_i)\sim \OO_C.$$
Hence for $\eta_C\nsim \OO_C$ such that $\eta_C^{\otimes k}\sim \OO_C$ we require
$$\OO((g-1)p-\sum_{i=1}^{g-1}q_i)\sim\eta_C.$$
There are $k^{2g}-1$ such $\eta_C$ and hence by the same reasoning as previously, such solutions will contribute 
$$(k^{2g}-1)(g-1)^2g$$
to the intersection $[\overline{D}_k]\cdot [B_\Delta]$. 

\begin{figure}[htbp]
\begin{center}
\begin{overpic}[width=0.8\textwidth]{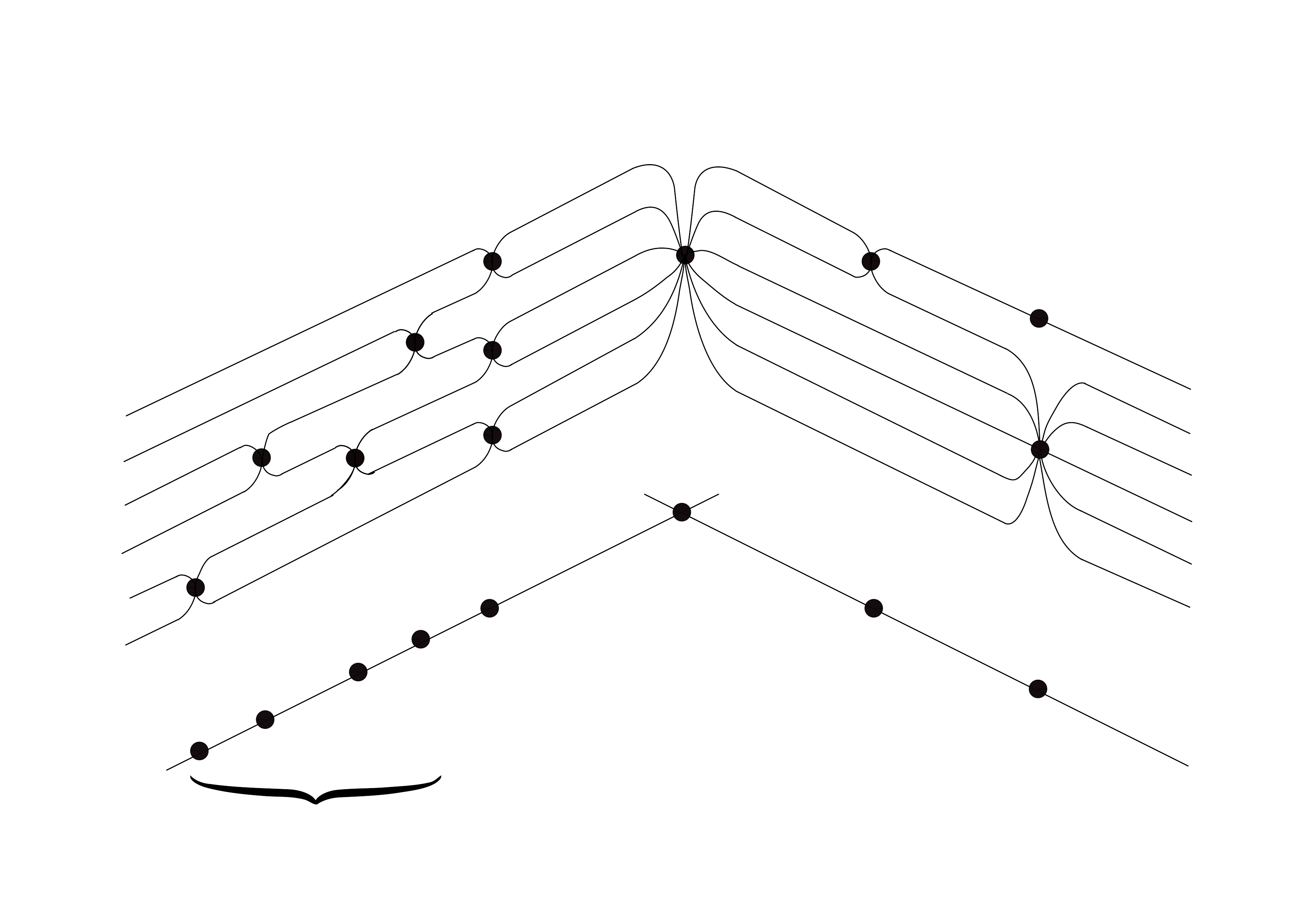}
\small{\put(12,6){$3g-2$ simple branch points}}

\put(37,20){$b_2$}
\put(78,14){$b_1$}
\put(81,35.5){$p_1$}
\put(79,47){$p_2$}
\put(53.3,51.2){$p$}
\put(38.5,50){$q_1$}
\put(38.5,43){$q_2$}
\put(38.5,36.5){$q_3$}
\end{overpic}
 \caption{$\overline{E}_k\cdot B_\Delta$ Case $2$: $b_1$ colliding with a simple branch point ($k=2$ and $g=4$). }
  \label{EkB2}

\end{center}
\end{figure}

The second case contributing to the intersection $[\overline{E}_k]\cdot [B_\Delta]$ where $b_1$ comes together with a simple branch point is shown in Figure~\ref{EkB2}. Riemann-Hurwitz shows that all sheets must come together at the point $p$ above the node and we are left with the same enumerative problem as the second case for the intersection $[\overline{D}_k]\cdot [B_\Delta]$. Hence we have solutions of this type contribute $(k^{2g}-1)(g-1)^2g$ to the intersection $[\overline{E}_k]\cdot [B_\Delta]$. This completes the proof.
\end{proof}

Theorem~\ref{Int} gives three independent relations in the coefficients of $\psi_1,\psi_2$ and $\delta_{0:\{1,2\}}$ and we have the following.

\begin{cor}\label{cor:class} The classes of divisors $\overline{D}_k$ and $\overline{E}_k$ in $\Pic(\Mbar{g}{2})$ are as follows
$$  [ \overline{D}_k]=\frac{1}{2}(gk+1)(gk-k+1)k^{2g-2}\psi_1+\frac{1}{2}(1-k)k^{2g-2}\psi_2 -\frac{1}{2}(gk^{2g}-k^{2g}+2)g\delta_{0:\{1,2\}}        +\dots$$
and
$$   [\overline{E}_k]=\frac{1}{2}(gk-1)(gk-k-1)k^{2g-2}\psi_1+\frac{1}{2}(k+1)k^{2g-2}\psi_2-\frac{1}{2}(gk^{2g}-k^{2g}+2)g\delta_{0:\{1,2\}}+\dots    $$
\end{cor}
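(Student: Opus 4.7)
The plan is to extract the three coefficients of $\psi_1$, $\psi_2$, and $\delta_{0:\{1,2\}}$ in $[\overline{D}_k]$ and $[\overline{E}_k]$ by solving the $3\times 3$ linear system obtained from Theorem~\ref{Int}. Since $F_1$, $F_2$, and $B_\Delta$ have zero intersection with all other standard generators of $\Pic(\Mbar{g}{2})$, writing
$$[\overline{D}_k] = c_{\psi_1}\psi_1 + c_{\psi_2}\psi_2 + c_{0:\{1,2\}}\delta_{0:\{1,2\}} + \dots$$
and pairing with the three test curves yields precisely three equations in the three unknowns $c_{\psi_1}, c_{\psi_2}, c_{0:\{1,2\}}$.

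First I would use the intersection $B_\Delta\cdot [\overline{D}_k]=(2-2g)c_{0:\{1,2\}}$ to directly solve for $c_{0:\{1,2\}}$, giving
$$c_{0:\{1,2\}}=\frac{(k^{2g}-1)(g-1)^2 g + g(g^2-1)}{2-2g}=-\frac{g\bigl((g-1)k^{2g}+2\bigr)}{2},$$
where the final simplification uses $(k^{2g}-1)(g-1)+(g+1)=(g-1)k^{2g}+2$. This matches the claimed coefficient $-\tfrac{1}{2}(gk^{2g}-k^{2g}+2)g$. The same computation works identically for $[\overline{E}_k]$ since $B_\Delta\cdot [\overline{E}_k]=B_\Delta\cdot[\overline{D}_k]$ by Theorem~\ref{Int}.

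Next I would substitute this value of $c_{0:\{1,2\}}$ into the pair of equations coming from $F_1\cdot[\overline{D}_k]$ and $F_2\cdot[\overline{D}_k]$, leaving the $2\times 2$ linear system
$$\begin{pmatrix} 1 & 2g-1 \\ 2g-1 & 1 \end{pmatrix}\begin{pmatrix} c_{\psi_1} \\ c_{\psi_2}\end{pmatrix} = \begin{pmatrix} (k^{2g-2}-1)g - c_{0:\{1,2\}} \\ (k(g-1)+1)^2 k^{2g-2} g - g - c_{0:\{1,2\}} \end{pmatrix}.$$
The coefficient matrix has determinant $1-(2g-1)^2=-4g(g-1)$, which is nonzero for $g\geq 2$, so Cramer's rule yields explicit formulas for $c_{\psi_1}$ and $c_{\psi_2}$. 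The analogous system for $[\overline{E}_k]$ has the same coefficient matrix and the right-hand side replaces $(k(g-1)+1)^2$ by $(k(g-1)-1)^2$.

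The only real obstacle is routine but somewhat tedious polynomial simplification: one needs to verify that the expressions produced by Cramer's rule collapse into the compact factored forms $\tfrac{1}{2}(gk+1)(gk-k+1)k^{2g-2}$, $\tfrac{1}{2}(1-k)k^{2g-2}$, $\tfrac{1}{2}(gk-1)(gk-k-1)k^{2g-2}$, and $\tfrac{1}{2}(k+1)k^{2g-2}$. I would perform this verification by checking that the differences between numerator expressions factor cleanly; the difference of the two right-hand sides for $[\overline{D}_k]$ is $\bigl((k(g-1)+1)^2 k^{2g-2}-k^{2g-2}\bigr)g=k^{2g-2}g\cdot k(g-1)\bigl(k(g-1)+2\bigr)$, which when divided by $-4g(g-1)$ yields the stated coefficient of $\psi_2$, and an analogous manipulation handles $\psi_1$ and the $\overline{E}_k$ case. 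The ``$+\dots$'' in the statement absorbs the unknown coefficients of $\lambda$, $\delta_0$, and the other boundary classes, which are not determined by these three test curves but are not required for the intended asymptotic application to Theorem~\ref{Thm:Extremal}.
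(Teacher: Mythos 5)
Your approach is exactly the paper's: the paper offers no written proof beyond the remark that Theorem~\ref{Int} gives three independent relations in the coefficients of $\psi_1$, $\psi_2$ and $\delta_{0:\{1,2\}}$, and your plan of inverting the resulting $3\times 3$ system (first reading off $c_{0:\{1,2\}}$ from $B_\Delta$, then solving the remaining $2\times 2$ system) is the intended argument. Your computation of $c_{0:\{1,2\}}$ is correct, the coefficient matrix and its determinant $-4g(g-1)$ are correct, and the stated answers do follow from a correct application of Cramer's rule (I checked them against the system). However, your final ``verification'' step contains a concrete error: the difference $R_2-R_1$ of the two right-hand sides, divided by the determinant $-4g(g-1)$, is \emph{not} the coefficient of $\psi_2$. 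Subtracting the first equation from the second gives $(2g-2)(c_{\psi_1}-c_{\psi_2})=R_2-R_1$, so $R_2-R_1=g\,k^{2g-1}(g-1)\bigl(k(g-1)+2\bigr)$ determines only the difference $c_{\psi_1}-c_{\psi_2}=\tfrac{1}{2}g\,k^{2g-1}\bigl(k(g-1)+2\bigr)$; for instance at $g=k=2$ your quantity equals $-8$ while the stated $c_{\psi_2}$ is $-2$. To finish you need the second combination, e.g.\ adding the two equations gives $2g\,(c_{\psi_1}+c_{\psi_2})=R_1+R_2$, and the pair of identities
$$c_{\psi_1}-c_{\psi_2}=\tfrac{1}{2}g\,k^{2g-1}\bigl(k(g-1)+2\bigr),\qquad c_{\psi_1}+c_{\psi_2}=\tfrac{1}{2}k^{2g-2}\bigl(g(g-1)k^2+2(g-1)k+2\bigr)$$
does recover the claimed factored forms; the $\overline{E}_k$ case is identical with $(k(g-1)+1)^2$ replaced by $(k(g-1)-1)^2$. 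So the route is right and matches the paper, but replace the incorrect elimination sentence with the correct one.
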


\section{Extremality}
In this section we utilise the two interesting families of divisors defined in the last section and the other forgetful morphisms on $\Mbar{g}{n}$ to show that $[F]$ is indeed an extremal nef curve in $\overline{\text{Nef}}_1(\Mbar{g}{n})$.

\Extremal*

\begin{proof} For $n=1$ the rank of $[F]^\vee\otimes\RR$ is $\rho(\Mbar{g}{n})-1$ and hence $[F]$ is extremal.

Consider the case $n=2$. Any nef curve in a non-trivial nef decomposition of $[F]$ has zero intersection with 
$$\psi_1-\delta_{0:\{1,2\}},\lambda,\delta_0, \{\delta_{i:\emptyset}\}_{i=1}^{g-1},\{ \delta_{i:\{1\}}  \}_{i=1}^{g-1}. $$
Further, the intersection with $\psi_1$ and $\delta_{0:\{1,2\}}$ is positive as these classes are effective and zero intersection would imply a nef curve class with zero intersection with all standard generators of $\Pic(\Mbar{g}{2})$ except $\psi_2$. Such a curve class has negative intersection with the effective classes $\psi_2$ or $\overline{D}_k$ for any $k\geq 2$ and is hence not nef. 

Hence any nef curve in a non-trivial nef decomposition of $[F]$ has class proportional to $[F^t]$ for some $t\ne0$ where
$$[F^t]\cdot\psi_1=1,\hspace{1cm}[F^t]\cdot \psi_2=(2g-1)+t,\hspace{1cm}[F^t]\cdot\delta_{0:\{1,2\}}=1$$
and zero intersection with the other standard generators of $\Pic(\Mbar{g}{2})$.

Observe by Corollary~\ref{cor:class}, for fixed $t>0$ 
$$ [F^t]\cdot [\overline{D}_k]= (k^{2g - 2}-1) g +t\frac{1}{2}(1-k)k^{2g-2} =\frac{-t}{2}k^{2g-1}+\OO(k^{2g-2})<0 \text{  for  }k\gg0,$$
while for fixed $t<0$ 
$$ [F^t]\cdot [\overline{E}_k]=(k^{2g - 2}-1) g   +t \frac{1}{2}k^{2g-2}(k+1)=\frac{t}{2}k^{2g-1}+\OO(k^{2g-2}) <0 \text{  for  }k\gg0.$$
Hence for any $t\ne 0$ there exists an effective divisor with negative intersection with $[F^t]$ and hence $[F^t]$ is nef if and only if $t=0$. Hence $[F]$ is extremal for $n=2$.

Consider now the general case where $n\geq3$. If $[F]$ is not extremal in $\overline{\text{Nef}}_1(\Mbar{g}{n})$ then there exists a non-trivial nef decomposition implying the existence of a nef curve with class $[F^{\underline{t}}]$ for $\underline{t}=(t_1,\dots,t_{n-1})\ne \underline{0}$ where
 $$[F^{\underline{t}}]\cdot\psi_i=[F^{\underline{t}}]\cdot\delta_{0:\{i,n\}}=1+t_i,\text{ for $i=1,\dots,n-1$} \hspace{1cm}[F^{\underline{t}}]\cdot \psi_n=2g-(n-3),  $$
 Let $\pi_j:\Mbar{g}{n}\longrightarrow\Mbar{g}{2}$ for $j=1,\dots n-1$ be the morphism forgetting all but the $j$th and $n$th points. As the push forward of any nef curve class is nef and ${\pi_j}_*[F]=[F]$ is extremal in $\overline{\text{Nef}}_1(\Mbar{g}{2})$ and we find 
 $${\pi_j}_*[F^{\underline{t}}]=k_j[F]$$
 for $0\leq k_j\leq 1$ for each $j=1,\dots n-1$. Observe that~\cite{AC}
 $$\pi_j^*\psi_1=\psi_j-\sum_{j\in S,n\nin S}\delta_{0:S},\hspace{0.5cm}\pi_j^*\psi_2=\psi_n-\sum_{n\in S,j\nin S}\delta_{0:S},\hspace{0.5cm}\pi_j^*\delta_{0:\{1,2\}}=\sum_{j,n\in S}\delta_{0:S}$$
 Hence a application of the projection formula for each choice of $j$ this yields
 $$ 2g-1-\sum_{i\ne j}t_i=k_j(2g-1)\hspace{0.5cm}\text{ and }\hspace{0.5cm}1+t_j=k_j.$$
 Giving $n-1$ linearly independent relations in the $t_j$,
 $$(2g-2)t_j+\sum_{i=1}^{n-1}t_i=0$$
 for each $j=1,\dots n-1$. Hence $[F^{\underline{t}}]$ is nef if and only if all $t_i=0$ providing a contradiction and hence $[F]$ is extremal.
\end{proof}

This results in the main result of this paper.

\NonPoly*

\begin{proof}
Theorem~\ref{Thm:Extremal} shows $[F]$ is extremal and Theorem~\ref{Thm:dual} shows the pseudoeffective dual space has corank at least $2$. The cone $\overline{\text{Eff}}^1(\Mbar{g}{n})$ is dual to $\overline{\text{Nef}}_1(\Mbar{g}{n})$ and hence both cones are not rational polyhedral.
\end{proof}



\bibliographystyle{plain}
\bibliography{base}
\end{document}